\documentclass[12pt]{article}
\usepackage[T2A]{fontenc}
\usepackage[utf8]{inputenc}
\usepackage{amssymb,amsmath,amsfonts,amsthm,amscd,latexsym,verbatim,graphics,epsfig,indentfirst,xcolor,array,ulem}
\usepackage{geometry}
\geometry{top=3cm}
\geometry{bottom=3cm}
\geometry{left=2.5cm}
\geometry{right=1.5cm}
\textwidth 452pt \textheight 650pt
\def\id{\mathrm{id}}

\def\Aut{\mathrm{Aut}}

\def\Inn{\mathrm{Inn}}
\def\Imm{\mathrm{Im}}

\def\Ss{\mathrm{S}}

\def\PSp{\mathrm{PSp}}

\def\SU{\mathrm{SU}}

\newtheorem{lemma}{Lemma}
\newtheorem{proposition}{Proposition}
\newtheorem{theorem}{Theorem}

\newtheorem{example}{Example}
\newtheorem{definition}{Definition}

\begin{document}

\sloppy

\hfill{20D06 (MSC2020)}

\begin{center}
{\Large On Rota--Baxter operators on finite simple groups of Lie type} 

\smallskip

Alexey Galt, Vsevolod Gubarev
\end{center}

\begin{abstract}
Rota--Baxter operators on groups were introduced by L. Guo, H.~Lang, Yu.~Sheng in 2020.
In 2023, V. Bardakov and the second author showed that all Rota--Baxter operators on simple sporadic groups are splitting, i.\,e. they correspond to exact factorizations of groups.
In 2024, the authors of the current paper described all non-splitting Rota--Baxter operators on alternating groups.

Now we describe Rota--Baxter operators on finite simple exceptional groups of Lie type and projective special linear groups of degree two.

{\it Keywords}:
Rota--Baxter operator, Rota--Baxter group, simple exceptional group, projective special linear group, factorization.
\end{abstract}

\section{Introduction}

In 1960~\cite{Baxter}, G. Baxter introduced the notion of Rota--Baxter operator on an algebra.
After more than 60 years of the study, a lot of connections and applications of Rota--Baxter operators were found: with different versions of the Yang--Baxter equation~\cite{Aguiar00,BelaDrin82,Semenov83}, pre- and postalgebras~\cite{Aguiar00,BBGN2011,GuoMonograph}, double Poisson algebras~\cite{DoubleLie2,DoubleLie,DoublePoissonFree}, etc. 

The notion of Rota--Baxter operator (of weight~$\pm1$) on groups was suggested by L.~Guo, H. Lang, and Yu. Sheng in~\cite{Guo2020}.
This subject has generated considerable interest among specialists and is rapidly developing in various areas~\cite{BG,BG2,Catino2023,Das,RBHopf,GV2017}.

Rota---Baxter operators have also been studied on Lie groups since the work~\cite{Guo2020} under the assumption that an RB-operator must be smooth, see~\cite{Jiang,Jiang2}.
Recently, S.~Skresanov~\cite{Skresanov} proved that there are only trivial RB-operators on a compact simple Lie group.

Given a group~$G$, one has always trivial RB-operators $B_e\colon g \to e$ and $B_{-1}\colon g\to g^{-1}$. 
The analog of the projection operator for algebras is the following:
given an exact factorization $G = HL$, the map
$$
B\colon hl \to l^{-1}
$$
is a~Rota--Baxter operator on~$G$. We will call it a splitting RB-operator. It was shown in~\cite{BG} that a given Rota--Baxter operator $B$ on a group~$G$ is splitting if and only if $\Imm(B\widetilde{B}) = 1$, where 
$$
\widetilde{B} \colon G\to G, \quad \widetilde{B}(g) = g^{-1}B(g^{-1}) 
$$
is another RB-operator on~$G$ defined with the help of~$B$.
Different constructions of non-splitting Rota--Baxter operators on groups were suggested in~\cite{BG}, but there are no non-splitting RB-operators on all simple sporadic groups~\cite[Theorem~53]{BG}. Continuing the study of Rota--Baxter operators on finite simple groups, all RB-operators on the alternating groups $\operatorname{A_n}$ were described in~\cite{GG}. In particular, it was shown that there are non-splitting RB-operators $B$ on $\operatorname{A_n}$ if and only if $n$ satisfies certain conditions, and there is an infinite number of such ones.

In the current paper we continue the classification of RB-operators on finite simple groups. Our first result deals with exceptional groups. 

\begin{theorem}\label{th:exceptional}
    The finite simple exceptional groups of Lie type have only trivial Rota--Baxter operators. 
\end{theorem}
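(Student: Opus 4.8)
The plan is to combine the general structure theory of Rota--Baxter operators on groups with the fact that finite simple exceptional groups of Lie type admit no proper factorization. First I would record the two structural ingredients coming from \cite{BG}. Given an RB-operator $B$ on a finite group $G$, the set $\Imm(B)$ is a subgroup, the set $\ker B=\{g: B(g)=e\}$ is a subgroup, and $|\Imm(B)|\cdot|\ker B|=|G|$; this follows because $B$ is a homomorphism from the descendent group $(G,\circ)$, with $x\circ y=xB(x)yB(x)^{-1}$, onto $\Imm(B)$, with kernel $\ker B$. Bringing in the companion operator $\widetilde{B}$ gives the key factorization: since $\widetilde{B}(g^{-1})=gB(g)$, every $g\in G$ can be written as
$$
g=\big(gB(g)\big)\cdot B(g)^{-1},
$$
so that $G=\Imm(\widetilde{B})\cdot\Imm(B)$ is a factorization of $G$ into two subgroups.

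Next I would split into cases according to whether this factorization is proper. If both $\Imm(B)$ and $\Imm(\widetilde{B})$ are proper subgroups, then $G=\Imm(\widetilde{B})\cdot\Imm(B)$ is a nontrivial factorization of the simple group $G$. At this point I would invoke the classification of factorizations of the finite simple groups: an exceptional group of Lie type admits no factorization $G=AB$ with $A$ and $B$ both proper. This contradiction rules the case out, so for an exceptional group at least one of $\Imm(B)$, $\Imm(\widetilde{B})$ equals $G$.

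It remains to treat the degenerate cases. Note that $\Imm(B)=G$ is equivalent to $\ker B=1$, i.e.\ to $B$ being bijective, and likewise $\Imm(\widetilde{B})=G$ means $\widetilde{B}$ is bijective. Here I would use the classification of bijective RB-operators (equivalently, of skew braces whose multiplicative group is a fixed finite nonabelian simple group): the only bijective RB-operator on such a group is $B_{-1}$. Hence $\Imm(B)=G$ forces $B=B_{-1}$; and, using $\widetilde{\widetilde{B}}=B$ together with $\widetilde{B_{-1}}=B_e$, the symmetric case $\Imm(\widetilde{B})=G$ forces $\widetilde{B}=B_{-1}$ and therefore $B=B_e$. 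In every case $B\in\{B_e,B_{-1}\}$, which is the assertion of the theorem.

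The main obstacle is the group-theoretic input rather than the Rota--Baxter formalism: establishing that no exceptional simple group of Lie type factorizes as a product of two proper subgroups rests on the deep classification of the maximal factorizations of the finite simple groups, and it is precisely this non-factorizability that distinguishes the exceptional case from, say, the sporadic groups. A secondary point requiring care is the bijective case, which is invisible to the factorization argument and must be handled separately through the correspondence between bijective RB-operators and skew braces on simple groups.
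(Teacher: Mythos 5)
There is a genuine gap, and it is at the heart of the argument. Your key group-theoretic claim --- that a finite simple exceptional group of Lie type admits no factorization $G=AB$ with both factors proper --- is false. The theorem of Hering, Liebeck and Saxl (the paper's reference [HLS]) does not say that no factorizations exist; it \emph{classifies} them, and the list is nonempty: $\operatorname{G_2(q)}=\operatorname{SL_3(q)}\cdot\operatorname{SU_3(q)}$ (and variants with the factors extended by $2$) for $q=3^m$, $\operatorname{G_2(q)}=\operatorname{^2G_2(q)}\cdot\operatorname{SL_3(q)}$ for $q=3^{2m+1}$, $\operatorname{F_4(q)}=\operatorname{Sp_8(q)}\cdot\operatorname{^3D_4(q)}$ (or $\operatorname{^3D_4(q)}.3$) for $q=2^m$, and $\operatorname{G_2(4)}=\operatorname{J_2}\cdot\operatorname{SU_3(4)}$. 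So the contradiction you derive in your main case simply does not arise for these four families, and your argument proves the theorem only for those exceptional groups having no non-trivial factorizations (which is exactly Proposition~\ref{Prop:simple}(1), quoted from [BG]). The substance of the paper's proof is precisely the analysis of the factorizations that \emph{do} exist: one checks that none of them is exact (so no non-trivial splitting operator can occur), and then rules out non-splitting operators either because one factor is simple --- $\operatorname{Sp_8(q)}$, $\operatorname{J_2}$, $\operatorname{^2G_2(q)}$ --- contradicting Proposition~\ref{Prop:simple}(2), or, in the case $\operatorname{G_2(3^m)}=\operatorname{SL_3(q)}\cdot\operatorname{SU_3(q)}$, by combining $|H\cap L|=|\Imm(B):\ker(\widetilde{B})|$ and $\ker(\widetilde{B})\unlhd\Imm(B)$ with the fact that $\operatorname{SL_3(3^m)}$ has trivial center and hence no proper non-trivial normal subgroups. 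None of this work appears in your proposal, and it cannot be bypassed.

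A secondary remark: your treatment of the degenerate case ($\Imm(B)=G$ or $\Imm(\widetilde{B})=G$) via the classification of bijective RB-operators, equivalently of skew braces with both groups isomorphic to a fixed nonabelian simple group (Byott's theorem), is essentially sound, and it is a legitimate alternative to what the paper does (the paper absorbs this case into the cited result [BG, Corollary~52], i.e.\ Proposition~\ref{Prop:simple}(1)). But this only handles the trivial factorization; it does not repair the main gap above. Also, your closing remark that non-factorizability ``distinguishes the exceptional case from the sporadic groups'' has it backwards: both classes contain groups with non-trivial factorizations, and in both cases the content of the theorem lies in analyzing them.
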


The remaining but ``largest'' class of finite simple groups is the class of classical groups. In~\cite{GG} a new kind of equivalence on a set of Rota--Baxter operators was introduced (see the definition below). Our second result classifies all Rota--Baxter operators on a simple linear group of dimension $2$ up to this equivalence.

\begin{theorem}\label{th:PSL2}
    The groups $\operatorname{PSL_2(q)}$ do not have non-splitting Rota--Baxter operators. 
    There are $s$ non-trivial non-equivalent splitting Rota--Baxter operators on $\operatorname{PSL_2(q)}$, where
  \begin{itemize}
      \item[$(a)$] $s=3$ for $q=11$;
      \item[$(b)$] $s=2$ for $q\in\{7,23,59\}$; 
      \item[$(c)$] $s=1$ for $q\not\equiv1\!\pmod4$, $q\notin\{7,11,23,59\}$;
      \item[$(d)$] $s=0$ for $q\equiv1\!\pmod4$.
  \end{itemize}
\end{theorem}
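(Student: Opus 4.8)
\emph{The dictionary.} The plan is to reduce the whole statement to a question about exact factorizations of $G=\PSL_2(q)$ and then to run it over Dickson's description of the subgroups of $G$. First I would recall from \cite{BG} and \cite{GG} that to every RB-operator $B$ one attaches the subgroups $H=\Imm(B)$ and $\widetilde H=\Imm(\widetilde B)$, that $G=H\widetilde H$ always holds, and that $B$ is splitting exactly when this factorization is exact, i.e. $H\cap\widetilde H=1$ (equivalently $\Imm(B\widetilde B)=1$, as in the Introduction). Under the equivalence of \cite{GG} — conjugation by $\Aut(G)$ together with the involution $B\mapsto\widetilde B$, which interchanges the roles of the two factors — a non-trivial splitting operator is the same datum as an unordered pair $\{H,L\}$ of proper non-trivial subgroups with $H\cap L=1$ and $|H|\,|L|=|G|$, taken up to $\Aut(G)$. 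Thus the theorem splits into (i) showing that no factorization of $G$ carries a non-trivial RB-deformation, and (ii) counting the $\Aut(G)$-orbits of exact factorizations.

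\emph{No non-splitting operators.} For (i) I would show that no factorization supports a non-splitting operator. Using the structural results of \cite{BG}, a non-splitting $B$ produces a factorization $G=H\widetilde H$ with $N:=H\cap\widetilde H\neq1$ together with the RB-compatibility (cocycle) data tying $H$, $\widetilde H$ and $N$; the content is that this data must be trivial. I would run over Dickson's list of subgroups of $\PSL_2(q)$ — cyclic tori, their dihedral normalizers, the Borel $P=U\rtimes C_{(q-1)/d}$ (with $d=\gcd(2,q-1)$), and the finitely many subgroups of type $\A_4,\Ss_4,\A_5,\PSL_2(q'),\PGL_2(q')$ — and for each admissible pair $(H,\widetilde H)$ with $|H|\,|\widetilde H|=|G|\,|N|$ verify that the simplicity of $G$ forces $N=1$. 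The point is that the rigid local structure of $\PSL_2(q)$ (no suitable normal subgroups in the relevant sections) leaves no room for a non-trivial twist.

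\emph{The generic factorization and case $(d)$.} For (ii) the main family is $G=P\cdot T$, where $P$ is a point stabilizer (Borel, of index $q+1$) in the $2$-transitive action on $\mathbb{P}^1(\mathbb{F}_q)$ and $T$ has order $q+1$ and acts regularly on the $q+1$ points, so $P\cap T=1$ automatically. Here I would use a fixed-point count for involutions: for $q$ odd an involution of $G$ lies in a split torus and fixes two points when $q\equiv1\pmod4$, and in a non-split torus and fixes no point when $q\equiv3\pmod4$. Since a regular subgroup of the even order $q+1$ must contain an involution, a regular $T$ exists only for $q\equiv3\pmod4$ (then $T\cong D_{(q+1)/2}$) and for $q$ even (then $T\cong C_{q+1}$); for $q\equiv1\pmod4$ no such $T$ exists. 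Eliminating, via the order and index constraints of Dickson's list, the only other conceivable exact factorizations when $q\equiv1\pmod4$ (the complement-to-$U$ type $M\cdot U$ with $|M|=(q^2-1)/2$, which survives only in degenerate small cases) yields case $(d)$, $s=0$, while the single generic orbit gives the baseline $s=1$ of case $(c)$.

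\emph{Exceptional values and the main obstacle.} Finally I would isolate the sporadic coincidences giving extra factorizations: an additional regular subgroup of order $q+1$ occurs exactly when $q+1\in\{12,24,60\}$, producing $\A_4\le\PSL_2(11)$, $\Ss_4\le\PSL_2(23)$, $\A_5\le\PSL_2(59)$ (each checked to embed and act regularly via Dickson's congruence conditions), whereas a complement-to-$U$ factorization $M\cdot U$ with $|M|=(q^2-1)/2$ occurs exactly when $(q^2-1)/2\in\{|\Ss_4|,|\A_5|\}=\{24,60\}$, i.e. $\Ss_4\cdot C_7$ in $\PSL_2(7)$ and $\A_5\cdot C_{11}$ in $\PSL_2(11)$. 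Tallying orbits, distinguished by the isomorphism types of the factors since the equivalence preserves them, gives $s=2$ for $q\in\{7,23,59\}$ and $s=3$ for $q=11$. I expect the main obstacle to be twofold: proving completeness — that Dickson's list admits no exact factorizations beyond those enumerated, which requires a careful arithmetic elimination over all pairs of admissible subgroup orders whose product is $|G|$ — and computing the precise number of $\Aut(G)$-orbits, since field and diagonal automorphisms may fuse or split the relevant conjugacy classes (notably the two classes of $\A_5$ in $\PSL_2(11)$ and of $\Ss_4$), which must be reconciled with the $B\mapsto\widetilde B$ symmetry to avoid over- or under-counting.
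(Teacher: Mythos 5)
Your overall architecture agrees with the paper's: an RB-operator yields the factorization $G=\Imm(B)\Imm(\widetilde{B})$, splitting operators correspond to exact factorizations, and the count in the theorem is a count of equivalence classes of exact factorizations; your final list (the generic Borel-times-regular family, the regular $\A_4,\Ss_4,\A_5$ of order $q+1$ at $q=11,23,59$, and $\Ss_4\cdot 7$, $\A_5\cdot 11$) coincides with the paper's Table~\ref{table}, and your involution fixed-point argument for case $(d)$ is a nice self-contained substitute for the congruence condition quoted from the literature. The route differs in one substantial way: the paper never re-derives the factorizations of $\PSL_2(q)$ from Dickson's theorem; it quotes the Liebeck--Praeger--Saxl classification of maximal factorizations and the recent Liebeck--Praeger determination of all factorizations with cyclic or dihedral intersection (in particular of all exact ones), so completeness comes for free. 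You propose instead an arithmetic elimination over Dickson's list and you concede that its completeness is ``the main obstacle''; that concession is accurate, but it means the hardest step of your argument --- complete enumeration not only of exact factorizations but of \emph{all} factorizations, which your non-splitting part also needs --- is left unexecuted. (Your equivalence dictionary is also slightly coarser than the paper's: the $Q(G)$-equivalence allows an independent inner twist $\alpha_x$ on one factor, i.e.\ $\Phi=(\varphi,\varphi\alpha_x)$, and this is exactly what the paper uses to merge the two conjugacy classes of $\Ss_4$ or $\A_5$ with a fixed complementary factor into a single class of RB-operators.)

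The genuine gap is the mechanism you offer for excluding non-splitting operators. ``Verify that the simplicity of $G$ forces $N=1$'' is not a criterion one can run: simplicity of $G$ is perfectly compatible with $N\neq1$ (non-exact factorizations of $\PSL_2(q)$ exist in abundance), and the unspecified ``RB-compatibility (cocycle) data'' is not what gets checked. The tool that actually works is Proposition~\ref{Prop:simple}(2) combined with $\ker(\widetilde{B})\unlhd\Imm(B)$ and the index relation $|R|=|\Imm(B):\ker(\widetilde{B})|$ from \cite[equation~(8)]{GG} (together with its mirror statement for the other factor): a non-splitting $B$ forces \emph{each} factor to contain a proper nontrivial normal subgroup of index exactly $|R|$. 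Simplicity of a \emph{factor} disposes of the non-exact factorizations involving $\A_5$, but two families have no simple factor at all, and your proposal as written has no way to handle them: for $q=2^m$ the factorization $(\mathbb{Z}_2^m\!:\!\mathbb{Z}_{q-1})\cdot D_{2(q+1)}$ with $|R|=2$, which the paper kills because the Borel subgroup has abelianization of odd order $q-1$ and hence no subgroup of index $2$; and $\PSL_2(7)=\Ss_4\cdot 7\!:\!3$ with $|R|=3$, killed because $\Ss_4$ has no normal subgroup of index $3$. Without the index-$|R|$ normal-subgroup criterion these cases --- and with them the first sentence of the theorem --- remain open in your argument.
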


All factors $\Imm(B)$ and $\Imm(\widetilde{B})$ of $\operatorname{PSL_2(q)}$ are given in Table~\ref{table}.

Theorems~\ref{th:exceptional} and~\ref{th:PSL2} show that there are no non-splitting Rota--Baxter operators on these groups. It is natural to ask does it true for all simple groups of Lie type? The answer is negative, and as shown in Example~\ref{Ex:PSp_6(2)}, the group $\operatorname{PSp_6(2)}$ has a non-splitting Rota--Baxter operator. 

In Lemma~\ref{lem:R=2} we describe a construction of non-splitting Rota--Baxter operators, which includes non-splitting RB-operators on $\operatorname{PSp_6(2)}$ and on the alternating groups. 

All known Rota--Baxter operators on finite simple groups arise from exact factorizations of the corresponding group.  
In Lemma~\ref{Extension} we provide a new construction of a~non-splitting Rota--Baxter operator on a (not necessarily finite) group $G$. In this case $G$ is an extension of an abelian group by a cyclic group. 
We apply Lemma~\ref{Extension} to construct a~non-splitting Rota--Baxter operator on a group of order~16 in Example~\ref{ex:extraspecial_group}. 
This is the first RB-operator that does not correspond to an exact factorization or to a homomorphism into an abelian subgroup.

\section{Notation and Preliminary results}
Our notation is standard. Let $q$ denote a power of a prime number $p$. 
The cyclic group of order $n$ is denoted by $\mathbb{Z}_n$ or $n$, and the elementary abelian group of order $p^m$ by~$p^m$.

Given a normal subgroup $A$ and a subgroup $B$, an arbitrary extension and a split extension of $A$ by $B$ are denoted by $A.B$ and $A\!:\!B$, respectively. We write $x^y=y^{-1}xy$ and $[x,y]=x^{-1}x^y$. We write $H\leqslant G$ and $H\unlhd G$ if $H$ is a subgroup or $H$ is a normal subgroup of $G$, respectively.

We recall the definition and some properties of a Rota--Baxter operator on a group.

\begin{definition}[\cite{Guo2020}]
Let $G$ be a group.
A map $B\colon G\to G$ is called a Rota--Baxter operator of weight~1 if
\begin{equation}\label{RB}
B(g)B(h) = B( g B(g) h B(g)^{-1} )
\end{equation}
holds for all $g,h\in G$.
\end{definition}

Note that a notion of Rota--Baxter operator on an abelian~$G$ coincides with homomorphism.

\begin{proposition}[\cite{Guo2020}]\label{prop:initial}
Let $(G,B)$ be a Rota--Baxter group. Then

(a) $\Imm(B)$ and $\ker(B)$ are subgroups of~$G$,

(b) $B(e) = e$,

(c) $B(g)^{-1} = B(B(g)^{-1}g^{-1}B(g))$ for every $g \in G$,

(d) $B(h) = B(gh)$ for all $h\in G$ and $g\in \ker(B)$.
\end{proposition}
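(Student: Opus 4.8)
The plan is to derive all four parts directly from the defining identity \eqref{RB}, namely $B(g)B(h) = B(gB(g)hB(g)^{-1})$, proving them not in their listed order but in the order (b), (c), (d), (a), since each later part reuses a substitution introduced earlier. First, for (b), I would set $g = h = e$ in \eqref{RB}. The right-hand side collapses to $B(eB(e)eB(e)^{-1}) = B(B(e)B(e)^{-1}) = B(e)$, so $B(e)^2 = B(e)$ and hence $B(e) = e$.

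The crucial step is (c), and the entire argument hinges on choosing the right value of $h$. I would look for an $h$ that forces the argument of $B$ on the right-hand side of \eqref{RB} to be $e$; solving $gB(g)hB(g)^{-1} = e$ yields $h = B(g)^{-1}g^{-1}B(g)$. Substituting this into \eqref{RB} turns the right-hand side into $B(e) = e$ by part (b), so $B(g)\,B(B(g)^{-1}g^{-1}B(g)) = e$, which is exactly (c) after multiplying on the left by $B(g)^{-1}$. I expect this substitution to be the only genuinely non-obvious point in the whole proposition; everything else is bookkeeping once it is in place.

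For (d), I would use the hypothesis $g \in \ker(B)$, i.e. $B(g) = e$, directly in \eqref{RB}: the right-hand side becomes $B(g\cdot e\cdot h\cdot e) = B(gh)$ while the left-hand side is $e\cdot B(h) = B(h)$, giving $B(h) = B(gh)$. Finally, for part (a): the set $\Imm(B)$ contains $e = B(e)$ by (b), is closed under products because \eqref{RB} exhibits $B(g)B(h)$ itself as a value of $B$, and is closed under inverses by (c); hence it is a subgroup. For $\ker(B)$: it contains $e$ by (b); if $g,h \in \ker(B)$ then (d) gives $B(gh) = B(h) = e$, so it is closed under products; and if $g \in \ker(B)$ then applying (c) with $B(g) = e$ yields $B(g^{-1}) = B(e\cdot g^{-1}\cdot e) = e$, so it is closed under inverses. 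Since $G$ need not be finite, I would invoke inverse-closure explicitly here rather than relying on any finiteness argument.
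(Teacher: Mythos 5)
Your proof is correct, and it is essentially the standard argument: the paper itself does not reprove this proposition (it cites \cite{Guo2020}), and the proofs there are exactly these direct substitutions into the identity \eqref{RB} — taking $g=h=e$ for (b), choosing $h=B(g)^{-1}g^{-1}B(g)$ to kill the right-hand side for (c), using $B(g)=e$ for (d), and assembling (a) from \eqref{RB}, (c), and (d). Your explicit remark that inverse-closure must be checked directly (rather than via finiteness) is a correct and appropriate observation, since the proposition is stated for arbitrary Rota--Baxter groups.
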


\begin{proposition}[\cite{Guo2020}]\label{prop:Btilde}
Let $(G,B)$ be a Rota--Baxter group. Then

(a) the map $\widetilde{B}$ defined as follows, $\widetilde{B}(g) = g^{-1}B(g^{-1})$ is a Rota--Baxter operator on~$G$,

(b) given $\varphi\in \Aut(G)$, $B^{(\varphi)} = \varphi^{-1}B\varphi$
is a~Rota--Baxter operator on $G$.
\end{proposition}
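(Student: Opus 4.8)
The plan is to verify directly that each of the two maps satisfies the defining Rota--Baxter identity \eqref{RB}, in both cases reducing the desired equation to an instance of \eqref{RB} that is already known to hold for $B$ itself. No structural theory is needed beyond the single identity for $B$; the whole proof is a substitution followed by a cancellation.

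For part (b) I would write $C = B^{(\varphi)} = \varphi^{-1}B\varphi$, so that $C(x) = \varphi^{-1}(B(\varphi(x)))$. Since $\varphi$, and hence $\varphi^{-1}$, is a group homomorphism, the left-hand side of \eqref{RB} for $C$ becomes
$$C(g)C(h) = \varphi^{-1}\bigl(B(\varphi(g))\,B(\varphi(h))\bigr).$$
On the right-hand side I would use $\varphi(C(g)) = B(\varphi(g))$ together with the homomorphism property to transform the twisted argument as $\varphi\bigl(g\,C(g)\,h\,C(g)^{-1}\bigr) = \varphi(g)\,B(\varphi(g))\,\varphi(h)\,B(\varphi(g))^{-1}$. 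Applying \eqref{RB} for $B$ at the points $\varphi(g),\varphi(h)$ then identifies the two sides. I expect this to be a short, routine calculation.

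The substantive case is part (a). Abbreviating $u = B(g^{-1})$, the first step is the telescoping identity $g\,\widetilde{B}(g) = g\,g^{-1}B(g^{-1}) = u$, which collapses the twisted argument to $g\,\widetilde{B}(g)\,h\,\widetilde{B}(g)^{-1} = u\,h\,u^{-1}g$. Expanding $\widetilde{B}$ at this element and inverting carefully gives
$$\widetilde{B}\bigl(u\,h\,u^{-1}g\bigr) = g^{-1}u\,h^{-1}u^{-1}\cdot B\bigl(g^{-1}u\,h^{-1}u^{-1}\bigr).$$
Comparing this with the left-hand side $\widetilde{B}(g)\widetilde{B}(h) = g^{-1}u\,h^{-1}B(h^{-1})$, I would cancel the common prefix $g^{-1}u\,h^{-1}$ and multiply by $u$ on the left, reducing the entire claim to
$$B(g^{-1})\,B(h^{-1}) = B\bigl(g^{-1}\,B(g^{-1})\,h^{-1}\,B(g^{-1})^{-1}\bigr),$$
which is precisely \eqref{RB} for $B$ evaluated at $g^{-1}$ and $h^{-1}$.

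The only real obstacle is the inverse bookkeeping in part (a): one must track the order reversal under inversion consistently so that the prefix $g^{-1}u\,h^{-1}$ cancels cleanly and the residual equation matches \eqref{RB} at the inverted arguments. Once the telescoping simplification $g\,\widetilde{B}(g) = B(g^{-1})$ is in place, everything else follows mechanically from the single identity for $B$.
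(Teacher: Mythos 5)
Your verification is correct: in part (a) the telescoping step $g\widetilde{B}(g)=B(g^{-1})$, the inversion $(uhu^{-1}g)^{-1}=g^{-1}uh^{-1}u^{-1}$, and the cancellation of the prefix $g^{-1}uh^{-1}$ all check out, reducing the claim exactly to \eqref{RB} at $g^{-1},h^{-1}$; part (b) is the routine conjugation computation you describe. The paper itself states this proposition as a citation from \cite{Guo2020} without proof, and your direct substitution argument is the same standard verification given there, so there is nothing to flag.
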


\begin{proposition}[\cite{Guo2020}]\label{prop:Derived}
Let $(G,\cdot,B)$ be a Rota--Baxter group. Then

(a) The pair $(G, \circ )$ with the product
\begin{equation}\label{R-product}
g\circ h = gB(g)hB(g)^{-1}, \quad g,h\in G,
\end{equation}
is a group.

(b) The operator $B$ is a Rota--Baxter operator on the group $(G,\circ)$.

(c) The map $B\colon (G,\circ) \to (G,\cdot)$
is a homomorphism of Rota--Baxter groups.
\end{proposition}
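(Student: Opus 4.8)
The plan is to isolate from the defining identity \eqref{RB} the one structural fact that powers all three parts: applying $B$ to a $\circ$-product turns it into an ordinary $\cdot$-product. Indeed, by the definition \eqref{R-product} of $\circ$, the right-hand side of \eqref{RB} is exactly $B(g\circ h)$, so \eqref{RB} reads
$$
B(g\circ h) = B(g)B(h)
$$
for all $g,h\in G$. I would record this first and invoke it repeatedly; it is the ``multiplicativity'' of $B$ from $(G,\circ)$ to $(G,\cdot)$.

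For part~(a) I would verify the group axioms for $\circ$ by direct computation. Associativity is the only real step: expanding $(g\circ h)\circ k$ requires $B(g\circ h)$, which by the displayed identity equals $B(g)B(h)$, and a short cancellation gives $(g\circ h)\circ k = gB(g)hB(h)kB(h)^{-1}B(g)^{-1}$; unfolding $g\circ(h\circ k)$ produces the same expression, so the two agree. The identity element is $e$, using $B(e)=e$ from Proposition~\ref{prop:initial}(b). For inverses I would propose the candidate $g^{-\circ}=B(g)^{-1}g^{-1}B(g)$: substituting into \eqref{R-product} gives $g\circ g^{-\circ}=e$ immediately, while the reverse product $g^{-\circ}\circ g=e$ follows once we know $B(g^{-\circ})=B(g)^{-1}$, which is precisely Proposition~\ref{prop:initial}(c). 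This is the single place where a genuine property of $B$, rather than bookkeeping, is needed.

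It is cleaner to prove (c) before (b). With (a) established, the identity $B(g\circ h)=B(g)B(h)$ together with $B(e)=e$ says exactly that $B\colon(G,\circ)\to(G,\cdot)$ is a homomorphism of groups, and as such it automatically sends $\circ$-inverses to $\cdot$-inverses. That it is a homomorphism of Rota--Baxter groups is the compatibility condition $B(B(g))=B(B(g))$, which is vacuous since the operator on both sides is $B$ itself. Part~(b) then follows formally: writing the Rota--Baxter axiom for $(G,\circ)$ and using that $B$ is a $\circ$-to-$\cdot$ homomorphism, the argument $g\circ B(g)\circ h\circ B(g)^{-\circ}$ on the right is sent by $B$ to $B(g)B(B(g))B(h)B(B(g))^{-1}$, and the left-hand side $B(g)\circ B(h)$ unfolds by \eqref{R-product} to the same expression, so the two coincide.

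I expect no serious obstacle here, only care in juggling the two operations and their inverses. The one indispensable external input is Proposition~\ref{prop:initial}(c), forced by the left-inverse check in~(a); once \eqref{RB} is read as the multiplicativity $B(g\circ h)=B(g)B(h)$, everything else cascades mechanically, and in particular (b) costs nothing beyond (c).
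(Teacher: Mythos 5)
Your proof is correct, and it follows the standard argument from the cited source: the paper itself states Proposition~\ref{prop:Derived} without proof, quoting it from~\cite{Guo2020}, and the key observation there is exactly yours, namely that \eqref{RB} reads as the multiplicativity $B(g\circ h)=B(g)B(h)$, from which associativity, the inverse formula $g^{-\circ}=B(g)^{-1}g^{-1}B(g)$ via Proposition~\ref{prop:initial}(c), and parts (b) and (c) all cascade. The only cosmetic remarks: your separate left-inverse check is redundant (a semigroup with two-sided identity and right inverses is already a group), and your (c)-before-(b) ordering is legitimate precisely because, as you implicitly use, only the group-homomorphism half of (c) feeds into (b), the Rota--Baxter compatibility being vacuous.
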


We will denote the group $(G, \circ )$ by $G^{(\circ)}$.

\begin{proposition}[\cite{Guo2020}]\label{Prop:STSh} 
If $(G, B)$ is an RB-group, then

(a) $\ker(B)$ and $\ker(\widetilde{B})$ are normal in $G^{(\circ)}$,

(b) $\ker(B) \unlhd \Imm(\widetilde{B})$ and $\ker(\widetilde{B}) \unlhd \Imm(B)$ in $G$,

(c) We have the isomorphism of the quotient groups
\begin{equation}\label{FactorIso}
\Imm(\widetilde{B})/\ker(B)\simeq \Imm(B)/\ker(\widetilde{B}),
\end{equation}

(d) We have the factorization
\begin{equation}\label{ImageFactorization}
G = \Imm(\widetilde{B})\Imm(B).
\end{equation}
\end{proposition}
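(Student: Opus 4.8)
The plan is to deduce everything from the single structural fact (Proposition~\ref{prop:Derived}) that $B$ is a group homomorphism from $G^{(\circ)}$ onto the subgroup $\Imm(B)$ of $(G,\cdot)$, together with an analogous homomorphism attached to $\widetilde{B}$. First I would record the link between the two derived groups: writing $G^{(\widetilde{\circ})}$ for the group built from $\widetilde{B}$ via Propositions~\ref{prop:Btilde}(a) and~\ref{prop:Derived}(a), a short computation shows that the inversion map $\iota\colon g\mapsto g^{-1}$ satisfies $\iota(g\circ h)=\iota(g)\,\widetilde{\circ}\,\iota(h)$, so $\iota\colon G^{(\circ)}\to G^{(\widetilde{\circ})}$ is an isomorphism (it is its own inverse). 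Consequently the map $\phi\colon G^{(\circ)}\to(G,\cdot)$, $\phi(g)=\widetilde{B}(g^{-1})=gB(g)$, obtained by composing $\iota$ with the homomorphism $\widetilde{B}\colon G^{(\widetilde{\circ})}\to(G,\cdot)$, is itself a homomorphism with image $\Imm(\widetilde{B})$ and kernel $\ker(\widetilde{B})$.

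Part~(a) is then immediate: $\ker(B)$ and $\ker(\widetilde{B})$ are the kernels of the homomorphisms $B$ and $\phi$ defined on $G^{(\circ)}$, hence both are normal in $G^{(\circ)}$.

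For~(b) and~(c) the key object is the subgroup $N=\ker(B)\,\ker(\widetilde{B})$ of $G^{(\circ)}$; being a product of two normal subgroups (by~(a)), it is itself normal in $G^{(\circ)}$. I would first check the easy inclusions $\ker(B)\subseteq\Imm(\widetilde{B})$ and $\ker(\widetilde{B})\subseteq\Imm(B)$ (if $B(g)=e$ then $g=\widetilde{B}(g^{-1})$; if $\widetilde{B}(g)=e$ then $g=B(g^{-1})$), and observe that $B(g)=g^{-1}$ on $\ker(\widetilde{B})$. Using $\phi(\ker(B))=\ker(B)$ and $\phi(\ker(\widetilde{B}))=e$ one identifies $\phi^{-1}(\ker(B))=N$, and symmetrically $B^{-1}(\ker(\widetilde{B}))=N$. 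Since $\phi$ (resp. $B$) is a surjection onto $\Imm(\widetilde{B})$ (resp. $\Imm(B)$) carrying the ordinary product, and $N$ is a normal subgroup of $G^{(\circ)}$ containing the relevant kernel, its image $\ker(B)=\phi(N)$ (resp. $\ker(\widetilde{B})=B(N)$) is normal in $\Imm(\widetilde{B})$ (resp. in $\Imm(B)$), which is~(b). Part~(c) then follows from the correspondence theorem: both quotients are isomorphic to $G^{(\circ)}/N$, namely $\Imm(\widetilde{B})/\ker(B)\cong G^{(\circ)}/\phi^{-1}(\ker(B))=G^{(\circ)}/N$ and $\Imm(B)/\ker(\widetilde{B})\cong G^{(\circ)}/B^{-1}(\ker(\widetilde{B}))=G^{(\circ)}/N$.

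Finally,~(d) is direct: from $\widetilde{B}(g)=g^{-1}B(g^{-1})$ one gets $g=B(g^{-1})\,\widetilde{B}(g)^{-1}$, so $g\in\Imm(B)\,\Imm(\widetilde{B})$; since both factors are subgroups, taking inverses yields $G=\Imm(B)\,\Imm(\widetilde{B})=\Imm(\widetilde{B})\,\Imm(B)$. The main obstacle I anticipate is the bookkeeping around the two derived products: one must verify carefully that $\iota$ is an isomorphism of $G^{(\circ)}$ onto $G^{(\widetilde{\circ})}$ and correctly identify the common preimage $\phi^{-1}(\ker(B))=B^{-1}(\ker(\widetilde{B}))=N$, since everything in~(b) and~(c) hinges on this single normal subgroup of $G^{(\circ)}$.
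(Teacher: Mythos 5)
Your proof is correct. Note that the paper itself gives no argument for this statement --- it is quoted from~\cite{Guo2020} --- so there is no in-paper proof to compare against; your derivation is a valid self-contained reconstruction from the other quoted facts (Propositions~\ref{prop:initial}, \ref{prop:Btilde}, \ref{prop:Derived}), and it follows essentially the same route as the cited source: the whole proposition flows from the fact that both $B$ and $\phi(g)=gB(g)=\widetilde{B}(g^{-1})$ are homomorphisms from $G^{(\circ)}$ to $(G,\cdot)$, with kernels $\ker(B)$, $\ker(\widetilde{B})$ and images $\Imm(B)$, $\Imm(\widetilde{B})$. Your computation that inversion intertwines the two derived products checks out ($\iota(g\circ h)=B(g)h^{-1}B(g)^{-1}g^{-1}=\iota(g)\,\widetilde{\circ}\,\iota(h)$, using $\widetilde{B}(g^{-1})=gB(g)$), as do the preimage identifications $\phi^{-1}(\ker B)=B^{-1}(\ker\widetilde{B})=N$. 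One small point you use implicitly: identifying $\ker\phi$ with $\ker(\widetilde{B})$ (rather than with $(\ker\widetilde{B})^{-1}$) requires that $\ker(\widetilde{B})$ is closed under inversion in $(G,\cdot)$, which is exactly Proposition~\ref{prop:initial}(a) applied to the RB-operator $\widetilde{B}$; it is worth stating, but it is not a gap.
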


\begin{proposition}\label{Prop:simple}
Let $B$ be a Rota--Baxter operator on a finite non-abelian simple group~$G$. 
    \begin{itemize}
        \item[{(1)}] If $G$ has no non-trivial factorizations, then $B$ is trivial; 
        \item[{(2)}] If $B$ is non-splitting, then $1<\ker(\widetilde{B})<\Imm(B)$ and $\Imm(B)$ is not simple. 
    \end{itemize}
\end{proposition}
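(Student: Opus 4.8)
The plan is to distill from Proposition~\ref{Prop:STSh} and the splitting criterion of~\cite{BG} a single clean reformulation of ``splitting'', to derive part~(2) from it, and then to read off part~(1) from the factorization~\eqref{ImageFactorization}. Two preliminary remarks drive everything: the map $B\mapsto\widetilde B$ is an involution, $\widetilde{\widetilde B}=B$ (a direct computation from Proposition~\ref{prop:Btilde}), and $\widetilde{B_e}=B_{-1}$, $\widetilde{B_{-1}}=B_e$; hence $B$ is trivial if and only if $\widetilde B$ is.

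First I would reformulate splitting. By~\cite{BG}, $B$ is splitting iff $\Imm(B\widetilde B)=1$, i.e. $\Imm(\widetilde B)\subseteq\ker(B)$. Since $\ker(B)\unlhd\Imm(\widetilde B)$ by Proposition~\ref{Prop:STSh}(b), this forces $\Imm(\widetilde B)=\ker(B)$, and then the isomorphism~\eqref{FactorIso} gives $\Imm(B)=\ker(\widetilde B)$; the converse is the same computation run backwards. Thus $B$ is splitting iff $\ker(\widetilde B)=\Imm(B)$. Because $\ker(\widetilde B)\unlhd\Imm(B)$ always holds, for non-splitting $B$ one gets the strict inclusion $\ker(\widetilde B)<\Imm(B)$, which is half of~(2). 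Moreover, once I also show $\ker(\widetilde B)>1$, the chain $1<\ker(\widetilde B)<\Imm(B)$ with $\ker(\widetilde B)\unlhd\Imm(B)$ exhibits a proper non-trivial normal subgroup of $\Imm(B)$, so $\Imm(B)$ is not simple; the last assertion of~(2) then comes for free.

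It remains to prove $\ker(\widetilde B)>1$, and this is where I expect the real work. Suppose $\ker(\widetilde B)=1$. Then $\widetilde B$ is injective, hence bijective as $G$ is finite, so $\Imm(\widetilde B)=G$; Proposition~\ref{Prop:STSh}(b) then gives $\ker(B)\unlhd G$, and simplicity yields $\ker(B)\in\{1,G\}$. The case $\ker(B)=G$ means $B=B_e$, which is splitting, a contradiction; so $\ker(B)=1$ and $B$ too is bijective. Now I would use Proposition~\ref{prop:Derived}: $B\colon G^{(\circ)}\to G$ is an isomorphism, so $\beta:=B^{-1}$ satisfies $\beta(ab)=\beta(a)\circ\beta(b)=\beta(a)\,a\,\beta(b)\,a^{-1}$, using $B(\beta(a))=a$ and the definition~\eqref{R-product} of $\circ$. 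Setting $\gamma(a):=\beta(a)a$ turns this into $\gamma(ab)=\gamma(a)\gamma(b)$, so $\gamma$ is an endomorphism of the simple group $G$, hence trivial or an automorphism. If $\gamma\equiv e$ then $\beta(a)=a^{-1}$, i.e. $B=B_{-1}$ is splitting, a contradiction. If $\gamma\in\Aut(G)$, then $\beta(a)=\gamma(a)a^{-1}$ is a bijection, and $\gamma(a)a^{-1}=\gamma(b)b^{-1}$ is equivalent to $\gamma(b^{-1}a)=b^{-1}a$; so $\gamma$ is a fixed-point-free automorphism of $G$. This is the crux, and the main obstacle: I would invoke the fact, a consequence of the classification of finite simple groups, that a finite non-abelian simple group admits no fixed-point-free automorphism. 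The resulting contradiction forces $\ker(\widetilde B)>1$ and completes~(2).

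Finally, part~(1) follows quickly. By~\eqref{ImageFactorization} we have $G=\Imm(\widetilde B)\,\Imm(B)$; if $G$ has no non-trivial factorization then one factor equals $G$, and replacing $B$ by $\widetilde B$ if necessary I may assume $\Imm(B)=G$. Were $B$ non-splitting, part~(2) would say $\Imm(B)$ is not simple, contradicting $\Imm(B)=G$; hence $B$ is splitting, whence $\ker(\widetilde B)=\Imm(B)=G$ by the reformulation above. Then $\widetilde B=B_e$ and therefore $B=\widetilde{\widetilde B}=B_{-1}$ is trivial. The only genuine difficulty in the whole argument is the group-theoretic input on fixed-point-free automorphisms; everything else is bookkeeping with Proposition~\ref{Prop:STSh} and the splitting criterion.
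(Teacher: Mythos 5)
Your proof is correct, but it necessarily takes a different route from the paper, because the paper does not actually prove this proposition: it disposes of part (1) by citing \cite[Corollary~52]{BG} and of part (2) by referring to the proof of \cite[Theorem~53]{BG}. You instead reconstruct a self-contained argument from Propositions~\ref{prop:Btilde}--\ref{Prop:STSh}, and the chain of deductions checks out: the equivalence ``$B$ splitting $\Leftrightarrow \ker(\widetilde B)=\Imm(B)$'' follows correctly from the criterion $\Imm(B\widetilde B)=1$ together with \eqref{FactorIso}, and it is exactly the mechanism the paper uses implicitly later (e.g.\ via $|R|=|\Imm(B):\ker(\widetilde B)|$ in the proofs of Theorems~\ref{th:exceptional} and~\ref{th:PSL2}). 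The crux you isolate --- excluding $\ker(\widetilde B)=1$ by converting a bijective RB-operator into the endomorphism $\gamma(a)=B^{-1}(a)\,a$ and then into a fixed-point-free automorphism of $G$ --- is sound, and the external input you invoke (a finite non-abelian simple group admits no fixed-point-free automorphism) is a true, standard consequence of CFSG; this is essentially the same ingredient that underlies the argument in \cite{BG}, so your proof is best viewed as an explicit reconstruction of what the citation hides. Two points to tighten: (i) the step ``$\ker(\widetilde B)=1$ implies $\widetilde B$ injective'' is not automatic for a set map --- it requires Proposition~\ref{prop:Derived}(c) applied to $\widetilde B$ (legitimate, since $\widetilde B$ is an RB-operator by Proposition~\ref{prop:Btilde}(a)), namely that $\widetilde B$ is a group homomorphism from its own derived group, whose underlying set is $G$; you invoke this for $B$ but should state it for $\widetilde B$ as well; (ii) the fixed-point-free fact deserves a precise reference (it follows from the classification of automorphisms of the finite simple groups, or from the CFSG-based solvability theorems for groups with fixed-point-free automorphisms). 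What each approach buys: the paper's citation is economical, while your version makes the proposition self-contained, exposes the single place where CFSG-strength input enters, and yields the reusable splitting criterion $\ker(\widetilde B)=\Imm(B)$.
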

\begin{proof}
    (1) \cite[Corollary~52]{BG}.\\
    (2) follows from the proof of \cite[Theorem~53]{BG}.
\end{proof}

\begin{proposition}[\cite{Jiang}]\label{prop:RBviaSubgroups}
Let $B$ be a map defined on a finite group~$G$.
Define $H_B = \{(B(g),gB(g))\mid g\in G\}\subset G\times G$.

(a) If $B$ is an RB-operator on $G$, then $H_B$ is a subgroup of $G\times G$.

(b) Let $H$ be a subgroup of $G\times G$.
Then there exists an RB-operator $B$ on $G$ such that $H = H_B$ if and only if 
$|H|=|G|$ and $\{ba^{-1}\mid (a,b)\in H\} = G$.    
\end{proposition}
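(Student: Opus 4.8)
The plan is to build both directions around a single auxiliary map, namely $\pi\colon G\times G\to G$ given by $\pi(a,b)=ba^{-1}$, restricted to $H$ (respectively $H_B$). The guiding idea is that the two conditions in part~(b) are exactly what forces $\pi|_H$ to be a bijection onto $G$, and that the Rota--Baxter identity is equivalent to a compatibility between the group law on $H$ and this bijection.

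For part~(a), I would take two elements $(B(g),gB(g))$ and $(B(h),hB(h))$ of $H_B$ and multiply them in $G\times G$, obtaining $(B(g)B(h),\,gB(g)hB(h))$. Applying the defining identity~\eqref{RB} with $k:=gB(g)hB(g)^{-1}$ rewrites the first coordinate as $B(k)$, and a one-line cancellation $kB(k)=gB(g)hB(g)^{-1}\cdot B(g)B(h)=gB(g)hB(h)$ shows the second coordinate equals $kB(k)$; hence the product again lies in $H_B$. Since $G$ is finite and $(e,e)=(B(e),eB(e))\in H_B$ by Proposition~\ref{prop:initial}(b), closure under multiplication upgrades $H_B$ to a subgroup.

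For part~(b), the forward implication is a short check: the map $g\mapsto(B(g),gB(g))$ is injective, since agreement of first coordinates together with $gB(g)=g'B(g')$ forces $g=g'$, so $|H_B|=|G|$; and $\pi(B(g),gB(g))=gB(g)B(g)^{-1}=g$ gives $\{ba^{-1}\mid(a,b)\in H_B\}=G$. For the converse, assume $H\leqslant G\times G$ satisfies $|H|=|G|$ and $\pi(H)=G$. Then $\pi|_H$ is a surjection between finite sets of equal cardinality, hence a bijection, so for each $g\in G$ there is a unique $(a,b)\in H$ with $ba^{-1}=g$; I would define $B(g):=a$, which forces $b=gB(g)$ and therefore $H=H_B$.

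It remains to verify that this $B$ satisfies~\eqref{RB}, and this is where the main work lies. Given $g,h$, the product $(B(g),gB(g))(B(h),hB(h))$ belongs to $H$ because $H$ is a subgroup, and its $\pi$-value is $gB(g)hB(h)(B(g)B(h))^{-1}=gB(g)hB(g)^{-1}=:k$. Thus this product is the unique element of $H$ lying over $k$, which by the definition of $B$ is $(B(k),kB(k))$; equating first coordinates yields $B(g)B(h)=B(k)=B(gB(g)hB(g)^{-1})$, precisely~\eqref{RB}. The only real obstacle is the bookkeeping that converts the abstract closure of $H$ into the concrete identity, that is, recognizing that $\pi$ is the correct invariant and that uniqueness of preimages under $\pi|_H$ plays the role of injectivity of $B$; once $\pi$ is in hand, every step reduces to a direct cancellation.
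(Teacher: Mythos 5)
Your proof is correct and complete: closure of $H_B$ follows from~\eqref{RB} exactly as you compute, and in the converse direction the bijectivity of $(a,b)\mapsto ba^{-1}$ restricted to $H$ both defines $B$ and, via closure of $H$ under multiplication, forces the identity~\eqref{RB}. Note that the paper itself gives no proof of this proposition — it is quoted from~\cite{Jiang} — so there is nothing internal to compare against; your graph-of-the-operator argument is the standard one and matches the approach of the cited reference.
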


Given an element $x$ of a group $G$, the automorphism $g\mapsto g^x$ is denoted by $\alpha_x$.
Let us consider the following subgroup of~$\Aut(G\times G)$:
$$
Q(G) = \{ (\varphi,\varphi\,\alpha_x)\mid \varphi\in\Aut(G),\,x\in G\} 
 \cup \{ \tau((\varphi,\varphi\,\alpha_x))\mid \varphi\in\Aut(G),\,x\in G \},
$$
where 
$\tau((\varphi,\varphi\,\alpha_x))\colon (g,h)\to ( \varphi\,\alpha_x(h), \varphi(g)  )$.

\begin{proposition}[{\cite[Proposition~11]{GG}}]\label{prop:equivalentRB}
Let $B$ be an RB-operator on a finite group~$G$.
For every $\Phi\in Q(G)$, there exists an RB-operator $B'$ on $G$ such that $\Phi(H_B) = H_{B'}$.
\end{proposition}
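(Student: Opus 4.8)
The plan is to reduce the statement to the criterion of Proposition~\ref{prop:RBviaSubgroups}(b): a subgroup $H\leqslant G\times G$ equals $H_{B'}$ for some RB-operator $B'$ if and only if $|H|=|G|$ and $\{ba^{-1}\mid(a,b)\in H\}=G$. Since $\Phi\in\Aut(G\times G)$ and $H_B$ is a subgroup by Proposition~\ref{prop:RBviaSubgroups}(a), the image $\Phi(H_B)$ is again a subgroup and $|\Phi(H_B)|=|H_B|=|G|$, so both size conditions are automatic. Everything therefore reduces to checking the single condition $\{ba^{-1}\mid(a,b)\in\Phi(H_B)\}=G$, which I would verify separately for the two families comprising $Q(G)$.

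For an element of the first type, $\Phi=(\varphi,\varphi\,\alpha_x)$ acting by $(a,b)\mapsto(\varphi(a),\varphi(x^{-1}bx))$, a typical element $(a,b)=(B(g),gB(g))$ of $H_B$ is sent to $(\varphi(B(g)),\varphi(x^{-1}gB(g)x))$, so the quantity to control is $\varphi(x^{-1}gB(g)x)\,\varphi(B(g))^{-1}=\varphi\bigl(x^{-1}gB(g)xB(g)^{-1}\bigr)$. The decisive observation is that $gB(g)xB(g)^{-1}=g\circ x$ for the product $\circ$ from~\eqref{R-product}, so the expression equals $\varphi\bigl(x^{-1}(g\circ x)\bigr)$. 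Because $(G,\circ)$ is a group by Proposition~\ref{prop:Derived}(a), the right $\circ$-translation $g\mapsto g\circ x$ is a bijection of $G$; composing it with ordinary left translation by $x^{-1}$ and with $\varphi$ shows $g\mapsto\varphi(x^{-1}(g\circ x))$ is a bijection, hence its image is all of $G$, as required.

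For an element of the second type, $\tau((\varphi,\varphi\,\alpha_x))$ acting by $(g,h)\mapsto(\varphi(x^{-1}hx),\varphi(g))$, the same element of $H_B$ now contributes $\varphi(B(g))\,\varphi(x^{-1}gB(g)x)^{-1}=\varphi\bigl(B(g)x^{-1}B(g)^{-1}g^{-1}x\bigr)$, and the inner factor is precisely the inverse of $x^{-1}(g\circ x)$; thus $g\mapsto\varphi\bigl((x^{-1}(g\circ x))^{-1}\bigr)$ is again a bijection of $G$ and the condition holds. Alternatively one may note $\tau((\varphi,\varphi\,\alpha_x))=\tau_0\circ(\varphi,\varphi\,\alpha_x)$, where $\tau_0$ is the coordinate swap, together with the identities $\tau_0(H_{B'})=H_{\widetilde{B'}}$ and $(\varphi,\varphi)(H_B)=H_{B^{(\varphi^{-1})}}$ coming from Proposition~\ref{prop:Btilde}; these reduce the diagonal and swap parts to the ready-made operators $\widetilde{B}$ and $B^{(\varphi)}$ and leave only the inner-automorphism part $\alpha_x$ genuinely to be handled. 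That part is exactly where the single identity $gB(g)xB(g)^{-1}=g\circ x$ does the work, converting an opaque surjectivity check into the triviality that translation in the group $(G,\circ)$ is a bijection; I expect this recognition of the $\circ$-product to be the only real obstacle, after which Proposition~\ref{prop:RBviaSubgroups}(b) supplies the operator $B'$.
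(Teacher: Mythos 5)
Your proof is correct. The paper itself gives no argument for this proposition---it is quoted from \cite[Proposition~11]{GG}---so there is no in-paper proof to compare against; judged on its own, your reduction to Proposition~\ref{prop:RBviaSubgroups}(b) is complete: $\Phi(H_B)$ is a subgroup of order $|H_B|=|G|$ (the map $g\mapsto(B(g),gB(g))$ is injective since $g=(gB(g))B(g)^{-1}$), and in both families of $Q(G)$ the element $ba^{-1}$ computes to $\varphi\bigl(x^{-1}(g\circ x)\bigr)$ or its $\varphi$-image inverse, so surjectivity follows from bijectivity of right $\circ$-translation in the group $(G,\circ)$ of Proposition~\ref{prop:Derived}(a). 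The identification $gB(g)xB(g)^{-1}=g\circ x$ is exactly the right key step, and your alternative factorization $\tau((\varphi,\varphi\,\alpha_x))=\tau_0\circ(\varphi,\varphi\,\alpha_x)$ with $\tau_0(H_{B'})=H_{\widetilde{B'}}$ and $(\varphi,\varphi)(H_B)=H_{B^{(\varphi^{-1})}}$ is also a valid (and arguably cleaner) way to isolate the inner-automorphism part as the only nontrivial case.
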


A pair of RB-operators $B,B'$ on a group $G$ is called {\it equivalent}, if 
$\Phi(H_B) = H_{B'}$ for some $\Phi\in Q(G)$.
Proposition~\ref{prop:Btilde} can be interpreted in terms of equivalent RB-operators as follows.
If $\Phi = \tau((\id,\id))$, we have $B' = \widetilde{B}$.
If $\Phi = (\varphi,\varphi)$, then $B' = B^{(\varphi)}$.

The next proposition shows that this equivalence preserves main properties of RB-operators.

\begin{proposition}[{\cite[Lemma~3]{GG}}]
Let~$G$ be a finite group and $B,B'$ be a pair of equivalent RB-operators on~$G$.
Then 

(a) $G^{(\circ)}(B)\simeq G^{(\circ)}(B')$,

(b) $B$ is trivial if and only if $B'$ is trivial,

(c) up to action of $B\to \widetilde{B}$, we have $\Imm(B)\simeq \Imm(B')$, $\Imm(\widetilde{B})\simeq \Imm(\widetilde{B'})$,

(d) $\Imm(B\widetilde{B})\simeq \Imm(B'\widetilde{B'})$,

(e) $B$ is splitting if and only if $B'$ is splitting,

(f) if $G = \ker(B)\cdot \Imm(B)$, then up to the action of $B\to \widetilde{B}$,
one has $G = \ker(B')\cdot \Imm(B')$.
\end{proposition}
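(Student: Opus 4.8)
The plan is to translate every assertion into the language of the subgroup $H_B\leqslant G\times G$ of Proposition~\ref{prop:RBviaSubgroups}, which is the real carrier of the invariants. First I would record the dictionary. The map $g\mapsto(B(g),gB(g))$ is an isomorphism $G^{(\circ)}\to H_B$: the identity~\eqref{RB} makes the first coordinate multiplicative and the product $\circ$ makes the second one so, so $H_B\cong G^{(\circ)}$. Under the coordinate projections $p_1,p_2\colon G\times G\to G$ one gets $p_1(H_B)=\Imm(B)$ and $p_2(H_B)=\Imm(\widetilde B)$, while $H_B\cap(1\times G)\cong\ker(B)$ and $H_B\cap(G\times 1)\cong\ker(\widetilde B)$; Goursat's lemma applied to $H_B$ then reproduces Proposition~\ref{Prop:STSh}. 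Since $\ker(p_1|_{H_B})=H_B\cap(1\times G)$ and $|H_B|=|G|$, this yields the numerical identity $|G|=|\Imm(B)|\cdot|\ker(B)|$, which I will use repeatedly.

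Next I would analyze the action of $Q(G)$. An element $(\varphi,\varphi\alpha_x)$ acts on $G\times G$ by the automorphism $\varphi$ in the first coordinate and by $\varphi\alpha_x$ in the second, preserving both coordinate subgroups; an element $\tau((\varphi,\varphi\alpha_x))$ does the same and additionally swaps the coordinates, the swap being exactly the passage $B\mapsto\widetilde B$ (cf. the discussion following Proposition~\ref{prop:equivalentRB}). As $\Phi\in Q(G)$ is an automorphism of $G\times G$ with $\Phi(H_B)=H_{B'}$, it restricts to an isomorphism $H_B\to H_{B'}$ carrying each of the four distinguished objects to its counterpart for $B'$, up to the optional swap and up to the component automorphisms (inner in the case of $\alpha_x$). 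Parts (a)--(c) are then formal: (a) is $G^{(\circ)}(B)\cong H_B\cong H_{B'}\cong G^{(\circ)}(B')$; for (c) the unordered pair of isomorphism types $\{\Imm(B),\Imm(\widetilde B)\}$ is preserved, which is precisely the ``up to $B\to\widetilde B$'' statement; and for (b) I note $B$ is trivial iff one of these projections is trivial ($\Imm(B)=1$ gives $B=B_e$, and $\Imm(\widetilde B)=1$ gives $B=B_{-1}$), a property preserved by (c).

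The substantive part is (d), to which (e) and (f) attach. The idea is to identify $\Imm(B\widetilde B)$ with an invariant of the triple $(G^{(\circ)},\ker(B),\ker(\widetilde B))$. Here $\Imm(B\widetilde B)=B(\Imm(\widetilde B))$, and by Proposition~\ref{prop:initial}(d) the map $B$ is constant on right cosets of $\ker(B)$, inducing a bijection $\ker(B)\backslash G\to\Imm(B)$; restricting it to $\ker(B)\backslash\Imm(\widetilde B)$ (legitimate as $\ker(B)\leqslant\Imm(\widetilde B)$ by Proposition~\ref{Prop:STSh}(b)) gives a bijection onto $\Imm(B\widetilde B)$, so $|\Imm(B\widetilde B)|=|\Imm(\widetilde B)/\ker(B)|$. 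Computing the Goursat quotient of $H_B$ in two ways gives the abstract isomorphisms
$$
\Imm(\widetilde B)/\ker(B)\;\cong\;G^{(\circ)}/\bigl(\ker(B)\cdot\ker(\widetilde B)\bigr)\;\cong\;\Imm(B)/\ker(\widetilde B),
$$
a group manifestly symmetric in $\ker(B),\ker(\widetilde B)$ and hence invariant under both the component automorphisms and the swap. Thus I aim to upgrade the coset bijection to a group isomorphism $\Imm(B\widetilde B)\cong G^{(\circ)}/(\ker(B)\cdot\ker(\widetilde B))$, proving (d). Part (e) then needs only the resulting order equality $|\Imm(B\widetilde B)|=|\Imm(B'\widetilde{B'})|$ together with the criterion ``$B$ splitting $\iff\Imm(B\widetilde B)=1$'' from the introduction. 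For (f), the identity $|G|=|\ker(B)|\cdot|\Imm(B)|$ reduces the factorization $G=\ker(B)\cdot\Imm(B)$ to the single condition $\ker(B)\cap\Imm(B)=1$, which I would then track through $\Phi$.

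I expect the main obstacle to be the group-theoretic heart of (d): checking that $\Imm(B\widetilde B)$ is a subgroup and that the coset bijection above is multiplicative. Since $B$ is not a homomorphism, this must be forced by~\eqref{RB}, e.g. through the identity $B(gB(g))=B(g)\,B(B(g))$ (which follows from $gB(g)=g\circ B(g)$ and $B(g\circ h)=B(g)B(h)$). A secondary difficulty lies in (f): the condition $\ker(B)\cap\Imm(B)=1$ compares a ``second-coordinate'' subgroup with a ``first-coordinate'' one, so under $\Phi$ the inner twist $\alpha_x$ mixes the coordinates and must be absorbed, with the freedom $B\to\widetilde B$ supplying the interchange that lets one conclude.
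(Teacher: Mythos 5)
Two preliminary remarks: the paper itself contains no proof of this statement (it is imported wholesale from \cite[Lemma~3]{GG}), so your proposal is judged on its own merits; and your dictionary is correct — $g\mapsto (B(g),gB(g))$ is an isomorphism $G^{(\circ)}\to H_B$, the projections of $H_B$ are $\Imm(B)$ and $\Imm(\widetilde{B})$, and $H_B\cap(1\times G)$, $H_B\cap(G\times 1)$ realize $\ker(B)$, $\ker(\widetilde{B})$ — so parts (a)--(c) go through as you describe, and (e) is indeed recoverable from the order identity $|\Imm(B\widetilde{B})|=|\Imm(\widetilde{B})|/|\ker(B)|$ together with the splitting criterion.

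The genuine gap is the step you yourself flag as the heart of (d), and it is not merely unfinished: the identification you aim for pairs the wrong group laws. What the Rota--Baxter identity gives you is multiplicativity of $B$ with respect to the \emph{circle} product, $B(h_1)B(h_2)=B(h_1\circ h_2)$ (Proposition~\ref{prop:Derived}(c)). Since $B\widetilde{B}(g^{-1})=B(gB(g))=B(g)B(B(g))=\widetilde{B}(B(g)^{-1})$, one gets $\Imm(B\widetilde{B})\subseteq \Imm(B)\cap\Imm(\widetilde{B})$, hence $\Imm(\widetilde{B})$ is $\circ$-closed, and your coset bijection becomes an isomorphism $\Imm(B\widetilde{B})\cong \Imm(\widetilde{B})^{(\circ)}/\ker(B)$ — a quotient of a \emph{$\circ$-subgroup of $G^{(\circ)}$}. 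The Goursat quotient $G^{(\circ)}/(\ker(B)\circ\ker(\widetilde{B}))$, by contrast, is isomorphic via $p_2$ to the \emph{dot}-quotient $\Imm(\widetilde{B})/\ker(B)$ taken in $(G,\cdot)$. These two quotients share the coset space and the order but carry different laws; making your bijection multiplicative for the dot-law would say precisely that $B|_{\Imm(\widetilde{B})}$ is a homomorphism of $(G,\cdot)$, which is exactly the kind of conclusion that Lemma~\ref{lem:old} needs extra hypotheses ($R$ abelian, $G=\ker(B)\Imm(B)$) to reach. The mismatch is visible in the paper's own Example~\ref{ex:extraspecial_group}: there the natural map $\Imm(\widetilde{B})^{(\circ)}\to G^{(\circ)}/(\ker(B)\circ\ker(\widetilde{B}))$ is not even surjective (the $\circ$-product $\Imm(\widetilde{B})\circ\ker(\widetilde{B})$ has order $8$ in a group of order $16$, while the target quotient has order $4$), so no naive identification of the two quotients exists.

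A route that does close (d), and also (f), stays with subgroups of $G$ rather than quotients. First upgrade the inclusion above to the equality $\Imm(B\widetilde{B})=\Imm(B)\cap\Imm(\widetilde{B})$ by counting: $|\Imm(B\widetilde{B})|=|\Imm(\widetilde{B})|/|\ker(B)|$ by your coset bijection, while $|\Imm(B)\cap\Imm(\widetilde{B})|=|\Imm(B)||\Imm(\widetilde{B})|/|G|=|\Imm(\widetilde{B})|/|\ker(B)|$ using the factorization $G=\Imm(\widetilde{B})\Imm(B)$ and $|G|=|\Imm(B)||\ker(B)|$. Then the inner twist that worries you is absorbed by that same factorization: under $\Phi=(\varphi,\varphi\alpha_x)$ one has $\Imm(B'\widetilde{B'})=\varphi\bigl(\Imm(B)\cap\Imm(\widetilde{B})^x\bigr)$, and writing $x=kh$ with $k\in\Imm(\widetilde{B})$, $h\in\Imm(B)$ gives $\Imm(\widetilde{B})^x=\Imm(\widetilde{B})^h$, whence $\Imm(B)\cap\Imm(\widetilde{B})^x=\bigl(\Imm(B)\cap\Imm(\widetilde{B})\bigr)^h$, a conjugate of $\Imm(B\widetilde{B})$. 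The same trick settles (f): assuming $\ker(B)\cap\Imm(B)=1$, i.e. $G=\ker(B)\Imm(B)$, write $x=nh$ with $n\in\ker(B)$, $h\in\Imm(B)$ to get $\ker(B)^x\cap\Imm(B)=\bigl(\ker(B)\cap\Imm(B)\bigr)^h=1$. Note that your proposed mechanism for (f) — letting the passage $B\to\widetilde{B}$ ``supply the interchange'' — does not address this: the swap is only needed for the $\tau$-type elements of $Q(G)$, while for $\Phi=(\varphi,\varphi\alpha_x)$ the twist must be killed by the conjugation argument, not by interchanging $B$ and $\widetilde{B}$.
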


\section{New constructions of Rota--Baxter operators}    

\begin{proposition}[{\cite[Proposition~5.1]{BG}}]
If $G$ is a group and $H$ is its abelian subgroup,
then any homomorphism (or antihomomorphism) $B\colon G \to H$ is a Rota---Baxter operator.
\end{proposition}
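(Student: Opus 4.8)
The plan is to verify the defining identity~\eqref{RB} directly, exploiting the fact that the image of $B$ lies in the abelian subgroup $H$, which forces the inner conjugation appearing on the right-hand side of~\eqref{RB} to collapse. Throughout, I would fix $g,h\in G$ and abbreviate $a=B(g)$ and $b=B(h)$; note that $a,b\in H$, and since $a\in H\subseteq G$ the element $B(a)=B(B(g))$ is again defined and again lies in $H$.

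First I would treat the homomorphism case. Expanding the argument of the right-hand side of~\eqref{RB} and using $B(xy)=B(x)B(y)$ together with $B(x^{-1})=B(x)^{-1}$, one obtains
$$
B\bigl(gB(g)hB(g)^{-1}\bigr)=B(g)\,B(B(g))\,B(h)\,B(B(g))^{-1}=a\,B(a)\,b\,B(a)^{-1}.
$$
Here every factor belongs to the abelian group $H$, so $B(a)$ commutes with $b$ and with $B(a)^{-1}$; hence the two occurrences of $B(a)^{\pm1}$ cancel and the expression reduces to $a\,b=B(g)B(h)$, which is exactly the left-hand side of~\eqref{RB}.

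For an antihomomorphism the computation is identical in spirit, the only change being that $B$ reverses the order of products. I would first record that an antihomomorphism also satisfies $B(e)=e$ and $B(x^{-1})=B(x)^{-1}$, and then expand
$$
B\bigl(gB(g)hB(g)^{-1}\bigr)=B(B(g)^{-1})\,B(h)\,B(B(g))\,B(g)=B(a)^{-1}\,b\,B(a)\,a.
$$
Again all factors lie in $H$; commutativity cancels $B(a)^{-1}$ against $B(a)$ and leaves $b\,a$, which equals $a\,b=B(g)B(h)$ precisely because $H$ is abelian.

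There is no genuine obstacle here: the argument is a direct verification, and the single point that makes it work is that landing in an abelian subgroup trivialises the conjugation built into~\eqref{RB}. The only care needed is the routine bookkeeping that $B(B(g))$ is well defined (because $H\subseteq G$) and that the (anti)homomorphism respects inverses and the identity.
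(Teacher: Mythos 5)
Your verification is correct: in both cases the key point is exactly as you say, that all of $B(g)$, $B(h)$, $B(B(g))$ lie in the abelian subgroup $H$, so the conjugation inside~\eqref{RB} collapses and both sides reduce to $B(g)B(h)$. The paper itself does not reprove this statement (it is quoted from \cite[Proposition~5.1]{BG}), and your direct expansion is the standard argument for it, so nothing further is needed.
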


\begin{proposition}[{\cite[Corollary~1]{GG}}]\label{coro:old}
Given an exact factorization $G = HL$, let $C$ be a~Rota---Baxter
operator on $L$. If $\Imm(\widetilde{C})\leqslant L$ normalizes $H$, then a~map $B \colon G \to G$ defined by the formula $B(hl) = C(l)$, where $h \in H$ and $l \in L$, is a Rota---Baxter operator.
\end{proposition}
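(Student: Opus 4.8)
The plan is to verify the defining identity \eqref{RB} directly, exploiting the uniqueness of the decomposition afforded by the exact factorization $G = HL$. Write arbitrary elements of $G$ as $g = h_1 l_1$ and $g' = h_2 l_2$ with $h_i \in H$ and $l_i \in L$; by definition $B(g) = C(l_1)$ and $B(g') = C(l_2)$, so the left-hand side of \eqref{RB} is simply $C(l_1)C(l_2)$. The whole task is therefore to identify the $L$-component of the argument $gB(g)g'B(g)^{-1}$ on the right-hand side, since $B$ reads off exactly that component.

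First I would expand, writing $c = C(l_1) \in L$:
$$
gB(g)g'B(g)^{-1} = h_1\,l_1 c\, h_2\, l_2 c^{-1}.
$$
The obstruction to factoring this as $H\cdot L$ is the stray $h_2 \in H$ sitting between the $L$-elements $l_1 c$ and $l_2 c^{-1}$, and this is exactly where the hypothesis enters. I would observe that $\Imm(\widetilde{C}) = \{\,u\,C(u) \mid u\in L\,\}$, since $\widetilde{C}(u^{-1}) = u\,C(u)$; in particular $l_1 c = l_1 C(l_1) \in \Imm(\widetilde{C})$, so by assumption $l_1 c$ normalizes $H$. Hence $h_2' := (l_1 c)\, h_2\, (l_1 c)^{-1} \in H$, and
$$
gB(g)g'B(g)^{-1} = h_1 h_2' \,\bigl(l_1 c\, l_2\, c^{-1}\bigr),
$$
with $h_1 h_2' \in H$ and $l_1 c\, l_2 c^{-1} = l_1 C(l_1) l_2 C(l_1)^{-1} \in L$. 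This is the desired $HL$-decomposition.

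Finally, applying $B$ picks out the $L$-part, so the right-hand side of \eqref{RB} equals $C\bigl(l_1 C(l_1) l_2 C(l_1)^{-1}\bigr)$, which is precisely $C(l_1)C(l_2)$ by the Rota--Baxter identity \eqref{RB} for $C$ on $L$. This matches the left-hand side and finishes the verification. I expect the only genuinely delicate point to be the identification $l_1 C(l_1) \in \Imm(\widetilde{C})$, together with the recognition that it is exactly this element --- rather than $C(l_1)$ itself --- whose normalizing property is needed to clear $h_2$ past the $L$-factors; everything else is a routine consequence of the uniqueness of the factorization.
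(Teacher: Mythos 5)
Your proof is correct and complete: the key observations --- that $l_1C(l_1)=\widetilde{C}(l_1^{-1})\in\Imm(\widetilde{C})$ normalizes $H$, allowing the stray $h_2$ to be conjugated past the $L$-factors, and that the resulting $L$-component $l_1C(l_1)l_2C(l_1)^{-1}$ collapses to $C(l_1)C(l_2)$ by the Rota--Baxter identity for $C$ --- are exactly what is needed, and well-definedness of $B$ follows from uniqueness of the $HL$-decomposition. The paper itself does not reproduce a proof (it imports the statement from \cite[Corollary~1]{GG}), but your direct verification is the standard argument for this result.
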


\begin{lemma}[{\cite[Lemma~3]{GG}}]\label{lem:old}
Let $B$ be an RB-operator on a finite group $G$ such that 

(i) $R = \Imm(B)\cap \Imm(\widetilde{B})$ is abelian,

(ii) $G = \ker(B)\cdot \Imm(B)$.

Then $\widetilde{B}|_{\Imm(B)}$ is a~homomorphism onto abelian subgroup~$R$ of $\Imm(B)$,
and $B$ is defined on $G$ by Proposition~\ref{coro:old}.
\end{lemma}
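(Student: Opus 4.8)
The plan is to first observe that hypothesis (ii) upgrades to an \emph{exact} factorization. Since $B\colon G^{(\circ)}\to(G,\cdot)$ is a homomorphism with image $\Imm(B)$ and kernel $\ker(B)$ (Proposition~\ref{prop:Derived}), the first isomorphism theorem in $G^{(\circ)}$ gives $|G|=|\ker(B)|\cdot|\Imm(B)|$; together with $G=\ker(B)\cdot\Imm(B)$ this forces $\ker(B)\cap\Imm(B)=1$. Writing $H=\ker(B)$ and $L=\Imm(B)$, every $g\in G$ has a unique expression $g=hl$, and Proposition~\ref{prop:initial}(d) yields $B(hl)=B(l)$. Thus $B$ already has the shape of the operator in Proposition~\ref{coro:old} with $C=B|_L$, and it remains to analyse $C$ and to verify the normalizing hypothesis.

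Next I would study the restriction $C=B|_L$. For $y\in L=\Imm(B)$ write $y=B(x)$ and factor $x=hl$; then $y=B(hl)=B(l)=C(l)$, so $\Imm(C)=L$. Applying $|L|=|\ker(C)|\cdot|\Imm(C)|$ in $L^{(\circ)}$ gives $\ker(C)=1$. The key structural observation is that $C$ is an RB-operator on $L$, so $\widetilde{C}=\widetilde{B}|_L$ is one as well (Proposition~\ref{prop:Btilde}), whence $\Imm(\widetilde{C})$ is a \emph{subgroup of $L$} by Proposition~\ref{prop:initial}(a). Since also $\Imm(\widetilde{C})\subseteq\Imm(\widetilde{B})$, we obtain $\Imm(\widetilde{C})\subseteq R$; this is exactly the inclusion $\Imm(\widetilde{C})\leqslant L$ demanded by Proposition~\ref{coro:old}, and it already shows that $\widetilde{B}|_{\Imm(B)}$ lands inside $\Imm(B)$.

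The heart of the argument, and the step I expect to be the main obstacle, is proving that $\widetilde{C}=\widetilde{B}|_L$ is a \emph{homomorphism}. Here I would exploit hypothesis (i). From Proposition~\ref{Prop:STSh}(c) applied to $C$ on $L$ (with $\ker(C)=1$, $\Imm(C)=L$) one gets $\Imm(\widetilde{C})\simeq L/\ker(\widetilde{C})$; since $\Imm(\widetilde{C})\subseteq R$ is abelian, $L/\ker(\widetilde{C})$ is abelian, i.e. $[L,L]\subseteq\ker(\widetilde{C})$, and $\ker(\widetilde{C})\unlhd L$ by Proposition~\ref{Prop:STSh}(b). Now for $a,b\in L$ I would expand the Rota--Baxter identity for $\widetilde{C}$, namely $\widetilde{C}(a)\widetilde{C}(b)=\widetilde{C}\bigl(a\,\widetilde{C}(a)\,b\,\widetilde{C}(a)^{-1}\bigr)$, and note that the argument on the right differs from $ab$ by the left factor $a\bigl(u b u^{-1}b^{-1}\bigr)a^{-1}$ with $u=\widetilde{C}(a)\in L$, which lies in $[L,L]\subseteq\ker(\widetilde{C})$ (normality absorbs the conjugation by $a$). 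By the $\widetilde{C}$-analogue of Proposition~\ref{prop:initial}(d) the right-hand side therefore equals $\widetilde{C}(ab)$, so $\widetilde{C}(ab)=\widetilde{C}(a)\widetilde{C}(b)$. The delicate point is precisely this collapse of the Rota--Baxter twist to an honest product, which is what the abelianity of $R$ buys us.

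Finally I would pin down the image and close the loop. A counting argument identifies $\Imm(\widetilde{C})$ with $R$: using $|G|=|\ker(B)|\cdot|\Imm(B)|$, the isomorphism $\Imm(\widetilde{C})\simeq\Imm(\widetilde{B})/\ker(B)$ (note $\ker(\widetilde{C})=\ker(\widetilde{B})$, since $\ker(\widetilde{B})\unlhd\Imm(B)=L$), and the factorization $G=\Imm(\widetilde{B})\Imm(B)$ from Proposition~\ref{Prop:STSh}(d), one computes $|\Imm(\widetilde{C})|=|\Imm(\widetilde{B})|\cdot|\Imm(B)|/|G|=|R|$; combined with $\Imm(\widetilde{C})\subseteq R$ this gives $\Imm(\widetilde{C})=R$. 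Hence $\widetilde{B}|_{\Imm(B)}$ is a homomorphism onto the abelian subgroup $R\leqslant\Imm(B)$. To conclude that $B$ is the operator of Proposition~\ref{coro:old}, it remains to observe that $R=\Imm(\widetilde{C})\subseteq\Imm(\widetilde{B})$ normalizes $H=\ker(B)$, which holds because $\ker(B)\unlhd\Imm(\widetilde{B})$ by Proposition~\ref{Prop:STSh}(b); thus all hypotheses of Proposition~\ref{coro:old} are met and $B(hl)=C(l)$.
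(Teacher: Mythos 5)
Your proposal is correct, and every step checks out: the order count $|G|=|\ker(B)|\cdot|\Imm(B)|$ from $B\colon G^{(\circ)}\to(G,\cdot)$ does upgrade (ii) to an exact factorization; $\Imm(C)=L$ and $\ker(C)=1$ follow as you say; the isomorphism of Proposition~\ref{Prop:STSh}(c) applied to $C$ gives $[L,L]\leqslant\ker(\widetilde{C})$, and your identity $a\,u b u^{-1}=\bigl(a(ubu^{-1}b^{-1})a^{-1}\bigr)ab$ together with normality of $\ker(\widetilde{C})$ in $L$ and the $\widetilde{C}$-analogue of Proposition~\ref{prop:initial}(d) correctly collapses the Rota--Baxter twist to $\widetilde{C}(ab)=\widetilde{C}(a)\widetilde{C}(b)$; the counting $|\Imm(\widetilde{C})|=|\Imm(\widetilde{B})|/|\ker(B)|=|R|$ and the normalization of $\ker(B)$ by $R$ via Proposition~\ref{Prop:STSh}(b) close the argument. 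Note that this paper does not reproduce a proof of the lemma at all --- it is imported as \cite[Lemma~3]{GG} --- so there is no in-paper argument to compare against; your write-up is a complete, self-contained derivation from the quoted propositions, which is more than the paper offers. The only step you leave implicit and should state explicitly is that $C=B|_L$ is an RB-operator on $L$ in the first place: this needs $B(L)\subseteq L$ (immediate from $\Imm(B)=L$) and the observation that the argument $gB(g)hB(g)^{-1}$ of identity~\eqref{RB} stays in $L$ for $g,h\in L$, so the identity genuinely restricts; likewise $\ker(C)=\ker(B)\cap L=1$ follows directly from exactness, making your detour through $L^{(\circ)}$ unnecessary though harmless.
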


\begin{lemma}\label{lem:R=2}
Let $G= HK$, $H_1\unlhd H$, $K_1\unlhd K$ and $H/H_1\simeq K/K_1$. Let $G=H_1K$ be an exact factorization of $G$ and $R=H\cap K$ be a subgroup of order two, which normalizes $H_1$. Let $t\in K\setminus K_1$ and $r\in R\setminus\{e\}$. Every element $g\in G$ has the form $g =h_1k=h_1t^\delta k_1$, where $h_1\in H_1, k\in K,k_1\in K_1$ and $\delta\in\{0,1\}$. 
Then a map $B\colon G\rightarrow G$ defined as
\begin{equation}\label{eq:ActionBWhen|R|=2}
B(g)
 = B(h_1k)
 = B(h_1t^\delta k_1)
 = k^{-1}r^\delta
\end{equation}
is a Rota--Baxter operator.
\end{lemma}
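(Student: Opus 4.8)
The plan is to verify the Rota--Baxter identity~\eqref{RB} directly, after first pinning down the combinatorial structure underlying the formula~\eqref{eq:ActionBWhen|R|=2}. Since $G = H_1K$ is an exact factorization and $\lvert R\rvert = 2$, comparing orders through $\lvert G\rvert = \lvert H_1\rvert\lvert K\rvert = \lvert H\rvert\lvert K\rvert/\lvert H\cap K\rvert$ gives $[H:H_1]=2$, and together with $H/H_1\simeq K/K_1$ this forces $[K:K_1]=2$, so $K = K_1\sqcup tK_1$. Consequently every $g\in G$ admits a unique expression $g = h_1k$ with $h_1\in H_1$, $k\in K$, while the coset $kK_1$ determines a well-defined $\delta\in\{0,1\}$ via $k = t^\delta k_1$. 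I would record at the outset that this makes $B$ well defined, and I will keep the notation $k = t^\delta k_1$, so that $B(g) = k^{-1}r^\delta$.

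Next, fix $g = h_1t^\delta k_1$ and $h = h_1't^{\delta'}k_1'$, and set $k = t^\delta k_1$, $k' = t^{\delta'}k_1'$, so that $B(g) = k^{-1}r^\delta$ and $B(h) = (k')^{-1}r^{\delta'}$; hence the left-hand side of~\eqref{RB} is $B(g)B(h) = k^{-1}r^\delta (k')^{-1}r^{\delta'}$. For the right-hand side I would compute the argument $w := gB(g)\,h\,B(g)^{-1}$. Here $gB(g) = h_1kk^{-1}r^\delta = h_1r^\delta$ and $B(g)^{-1} = r^\delta k$ (using $r^2 = e$), so $w = h_1r^\delta h_1'k'r^\delta k$. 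Using that $R = \langle r\rangle\leqslant K$ normalizes $H_1$, I move the first $r^\delta$ past $h_1'$: setting $h_1'' = r^\delta h_1'r^\delta\in H_1$ gives $w = (h_1h_1'')(r^\delta k'r^\delta k)$. Since $r,k',k\in K$, this exhibits $w = \hat h_1\hat k$ with $\hat h_1 = h_1h_1''\in H_1$ and $\hat k = r^\delta k'r^\delta k\in K$, which by exactness of $G = H_1K$ is the canonical decomposition of $w$.

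It then remains to read off $B(w) = \hat k^{-1}r^{\bar\delta}$, where $\bar\delta\in\{0,1\}$ is fixed by the coset $\hat kK_1$. The key observation is that $K/K_1$ is abelian of order $2$ and $r^2 = e$, so the two inserted copies of $r^\delta$ contribute $r^{2\delta}=e$ in the quotient and the image of $\hat k$ in $K/K_1$ coincides with that of $k'k$; hence $\bar\delta\equiv \delta+\delta'\pmod 2$. Computing $\hat k^{-1} = k^{-1}r^\delta (k')^{-1}r^\delta$ and multiplying by $r^{\bar\delta}$ yields $B(w) = k^{-1}r^\delta (k')^{-1}r^{\delta+\bar\delta}$, and $\delta+\bar\delta\equiv 2\delta+\delta'\equiv\delta'\pmod 2$ collapses the final exponent to $\delta'$. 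Thus $B(w) = k^{-1}r^\delta (k')^{-1}r^{\delta'} = B(g)B(h)$, so~\eqref{RB} holds.

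The computation is short, and the only genuinely delicate point is the exponent bookkeeping for $r$: it works out precisely because $r^2 = e$ and $[K:K_1]=2$ make $K/K_1$ abelian, so the inserted factors $r^\delta$ cancel in the quotient while combining correctly with the trailing $r^{\bar\delta}$. I would therefore take particular care to justify both that $w = \hat h_1\hat k$ is the canonical $H_1K$-decomposition (so that $B(w)$ is evaluated on the correct factor $\hat k$) and that it is exactly the normalization of $H_1$ by $R$ that keeps $\hat h_1$ inside $H_1$.
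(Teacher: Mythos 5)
Your proof is correct, but it follows a genuinely different route from the paper's. You verify the Rota--Baxter identity~\eqref{RB} directly: you decompose $w = gB(g)hB(g)^{-1}$ as $\hat h_1\hat k$ with $\hat h_1\in H_1$, $\hat k\in K$ (using that $R$ normalizes $H_1$), and then track the exponent of $r$ through the quotient $K/K_1$. The paper instead reduces everything to existing machinery: it restricts $B$ to $K$, getting $C(k) = k^{-1}r^\delta$, observes that $\widetilde{C}(k) = k^{-1}C(k^{-1}) = r^\delta$ is a homomorphism of $K$ onto the abelian subgroup $R\simeq\mathbb{Z}_2$ (hence $\widetilde{C}$, and therefore $C$, is an RB-operator on $K$), and then, since $\Imm(\widetilde{C}) = R$ normalizes $H_1$, invokes Proposition~\ref{coro:old} for the exact factorization $G = H_1K$ to conclude that $B(h_1k) = C(k)$ is an RB-operator on $G$ --- a two-line argument. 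Your computation is essentially an unwinding of that machinery: it is self-contained, requires nothing beyond the definition, and makes explicit where each hypothesis enters (exactness for well-definedness and for $w=\hat h_1\hat k$ being the canonical decomposition, normalization of $H_1$ by $R$ to keep $\hat h_1$ in $H_1$, and $|R|=2$ together with $[K:K_1]=2$ for the exponent bookkeeping); the price is that you reprove a special case of the general extension result rather than citing it. One minor caveat: your derivation of $[H:H_1]=2$ by comparing orders silently assumes $G$ is finite; this is harmless (the statement itself grants the decomposition $g = h_1t^\delta k_1$, and in any case writing $h = h_1k$ for $h\in H$ forces $k\in H\cap K = R$, so $H = H_1R$ with $H_1\cap R\leqslant H_1\cap K = 1$, giving $[H:H_1]=2$ without any counting).
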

\begin{proof}
A map $C=B|_K\colon G\rightarrow G$ acts as 
$C(k) = k^{-1}r^\delta$. 
Hence, $\widetilde{C}(k)=k^{-1}C(k^{-1})=r^\delta$ is a homomorphism onto
$R\simeq\mathbb{Z}_2$ and therefore $C$ and $\widetilde{C}$ are Rota--Baxter operators on~$K$. 
Since $\Imm(\widetilde{C})=R$ normalizes $H_1$, $B$ is a Rota--Baxter operator by Proposition~\ref{coro:old}.
\end{proof}

Now we apply this lemma to give an example of a simple classical group with a~non-splitting Rota--Baxter operator.

\begin{example}\label{Ex:PSp_6(2)}
According to~\cite[Table~10.7]{LP_fact}, the group $G=\PSp_6(2)$ has the following factorization
$$G=HK=\Imm B\cdot\Imm \widetilde{B}=(\SU_3(3)\!:\!2)\cdot(\Ss_5\times2).$$ 
In addition, $R=H\cap K=\mathbb{Z}_2$ and $\PSp_6(2)=\SU_3(3)\cdot(\Ss_5\times2)=(\SU_3(3)\!:\!2)\cdot\Ss_5$ are exact factorizations of $G$. We are in the situation of Lemma~\ref{lem:R=2}, which provides a non-splitting Rota--Baxter operator on $G$. 
\end{example}

\begin{lemma}\label{Extension}
Let $G=\langle A,f\rangle$, where $A$ is a normal abelian subgroup of $G$. Let
$B \colon G\rightarrow G$ be a map such that
$B|_{A}$ is a homomorphism, $B(f)\in A$, and $B(f^ka)=B(f)^kB(a)$ for $k\in\mathbb{Z}$. Then $B$ is a~Rota--Baxter operator if and only if $[\Imm(\widetilde{B}),f]\leqslant \ker(B)$.
\end{lemma}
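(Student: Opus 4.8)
The plan is to verify the defining identity~\eqref{RB} directly on a normal form for elements of $G$ and then to show that the resulting family of conditions collapses to the single commutator condition in the statement. Since $A\unlhd G$ is abelian and $G=\langle A,f\rangle$, the quotient $G/A$ is cyclic and generated by $fA$, so every element is $f^ka$ with $k\in\mathbb{Z}$, $a\in A$; as $B(f)\in A$ and $B|_A$ maps $A$ into $A$, we get $\Imm(B)\subseteq A$, an abelian group. Writing $\beta=B(f)$, $\sigma(x)=fxf^{-1}$ for the conjugation automorphism of $A$, and $\eta(a)=aB(a)$ (a homomorphism $A\to A$, since $A$ is abelian and $B|_A$ is a homomorphism), I would first bring $g\,B(g)\,h\,B(g)^{-1}$ into the normal form $f^{i+j}c$ for $g=f^ia$, $h=f^jb$ by pushing all powers of $f$ to the left, which gives $c=\sigma^{-j}(aB(g))\,b\,B(g)^{-1}\in A$.

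First I would substitute this into~\eqref{RB}, apply the homomorphism property of $B|_A$ and use that $\Imm(B)$ lies in the abelian group $A$ to cancel the common left factor $\beta^{i+j}$ and the free factor contributed by $b$. After this cancellation the identity~\eqref{RB} becomes equivalent to the single requirement that, for all $i,j\in\mathbb{Z}$ and $a\in A$,
\[
x\,\sigma^{-j}(x)^{-1}\in\ker(B),\qquad x:=a\,B(f^ia)=\eta(a)\beta^{i}.
\]
As $(i,a)$ runs over $\mathbb{Z}\times A$, the element $x$ runs exactly over the subgroup $M:=\Imm(\eta)\langle\beta\rangle\leqslant A$. Hence $B$ is a Rota--Baxter operator if and only if $x\,\sigma^{-j}(x)^{-1}\in\ker(B)$ for all $x\in M$ and all $j\in\mathbb{Z}$.

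Next I would compute $\widetilde{B}$ on the normal form, obtaining $\widetilde{B}(f^ia)=f^{-i}d$ with $A$-part $d=\eta(\sigma^i(a))^{-1}\beta^{-i}$, so that the set of $A$-parts of $\Imm(\widetilde{B})$ is again exactly $M$. Since $f^{-i}$ commutes with $f$, one has $[\widetilde{B}(f^ia),f]=[d,f]=d^{-1}\sigma^{-1}(d)$, and therefore $[\Imm(\widetilde{B}),f]=\{x\,\sigma^{-1}(x)^{-1}\mid x\in M\}$ as a subgroup of $A$. Thus the hypothesis $[\Imm(\widetilde{B}),f]\leqslant\ker(B)$ is precisely the case $j=1$ of the condition above, which already settles the forward implication.

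The hard part will be the converse: deducing the full family (all $j$) from the single case $j=1$. Here I would use the telescoping identity $x\,\sigma^{-j}(x)^{-1}=\prod_{l=0}^{j-1}\sigma^{-l}\big(x\,\sigma^{-1}(x)^{-1}\big)$, which reduces everything to the $\sigma$-invariance of $N:=\ker(B|_A)$. The observation I expect to be the crux is that $N\leqslant M$: indeed $\eta$ restricts to the identity on $N$, so $N=\eta(N)\subseteq\Imm(\eta)\subseteq M$. Feeding $x=n\in N$ into the $j=1$ condition gives $n\,\sigma^{-1}(n)^{-1}\in A\cap\ker(B)=N$, hence $\sigma^{-1}(n)\in N$, so $\sigma^{-1}(N)\subseteq N$ (with equality once $\sigma$ has finite order, e.g. for finite $G$); the same input with $x=m\in M$ yields $\sigma^{-1}(M)\subseteq M$. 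With $N$ and $M$ invariant under $\sigma$, each factor $\sigma^{-l}(x\,\sigma^{-1}(x)^{-1})$ lies in $N\leqslant\ker(B)$, so the telescoped product does too for every $j$, which is exactly~\eqref{RB}. I expect the only remaining delicacy to be the bookkeeping of the conjugation exponents and the treatment of negative $j$ in the genuinely infinite case, where one passes between $\sigma$ and $\sigma^{-1}$ using the invariance of $M$ (or the finite order of $f$ modulo $A$).
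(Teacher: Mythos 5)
Your reduction of the defining identity is the same computation as the paper's, just organized differently: you push powers of $f$ to the left, while the paper rewrites the discrepancy as $B([\widetilde{B}((xa)^{-1}),y])$ with $y\in\langle f\rangle$; both yield the identical family of conditions $x\,\sigma^{-j}(x)^{-1}\in\ker(B)$ for all $x\in M$, $j\in\mathbb{Z}$. Your identification of the $A$-parts of $\Imm(\widetilde{B})$ with $M$, the observation $N:=\ker(B|_A)\leqslant M$, the $\sigma^{-1}$-invariance of $M$ and $N$, the telescoping for $j\geqslant 0$, and the forward implication are all correct.

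The genuine gap is exactly the point you deferred: negative $j$. From the $j=1$ condition you can only ever extract $\sigma^{-1}$-invariance, never $\sigma$-invariance, and no bookkeeping can repair this, because under your (weak) reading of the bracket --- the subgroup generated by the commutators $[w,f]$, $w\in\Imm(\widetilde{B})$ --- the ``if'' direction is simply false for infinite $G$. Take $A=\bigoplus_{i\in\mathbb{Z}}\langle e_i\rangle$ with every $e_i$ of order $2$, let $G=A\rtimes\langle f\rangle$ with $f$ of infinite order and $fe_if^{-1}=e_{i+1}$, and define $B(f^ka)=B_0(a)$, where $B_0$ is the homomorphism with $B_0(e_i)=1$ for $i<0$ and $B_0(e_i)=e_i$ for $i\geqslant 0$. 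The hypotheses of the lemma hold with $B(f)=1$, and here $M=N=\langle e_i\mid i<0\rangle$, so every $[w,f]$ with $w\in\Imm(\widetilde{B})$ lies in $M\leqslant\ker(B)$ and your $j=1$ condition is satisfied; yet for $g=e_{-1}$, $h=f^{-1}$ one has $B(g)B(h)=1$ while $B(gB(g)hB(g)^{-1})=B(e_{-1}f^{-1})=B(f^{-1}e_0)=e_0\neq 1$, so $B$ is not a Rota--Baxter operator. Hence the case $j=1$ genuinely does not imply the case $j=-1$ in the infinite setting.

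It is worth saying how the paper sidesteps this. Its closing sentence declares the derived condition ``$[\widetilde{B}(t),y]\in\ker(B)$ for all $t\in G$, $y\in\langle f\rangle$'' to be equivalent to the stated one, i.e.\ it implicitly reads $[\Imm(\widetilde{B}),f]$ as $[\Imm(\widetilde{B}),\langle f\rangle]$; under that reading the equivalence is immediate (all generators lie in $A$, and $\ker(B)\cap A$ is a subgroup since $B|_A$ is a homomorphism), and there is no ``hard part'' at all. Under your reading, your argument does prove the lemma whenever conjugation by $f$ has finite order on $A$ --- in particular for finite $G$, which covers the paper's application in Example~\ref{ex:extraspecial_group}, and that is a genuine sharpening there --- but as a proof of the lemma as stated for arbitrary groups it is incomplete, and the missing step cannot be supplied. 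The fix is to interpret (or restate) the commutator condition with all of $\langle f\rangle$, as the paper's own proof effectively does.
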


\begin{proof}
We need to check the equality $$B(g)B(h)=B(gB(g)hB(g)^{-1})$$
for all $g,h \in G$. From the construction of $B$ it follows that $\Imm(B)\subseteq A$. Since $A$ is abelian, the required equality holds for $g\in G$ and $h\in A$.

Let $g=xa, h=yb$, where $a,b\in A$ and $x=f^m$, $y=f^n$ for some integers $m,n$. Then
\begin{multline*}
B(gB(g)hB(g)^{-1})=B(xaB(xa)ybB(xa)^{-1})=B(xy(aB(xa))^ybB(xa)^{-1}) \\
 = B(xy)\cdot B((aB(xa))^y)\cdot B(b)\cdot B(B(xa)^{-1}).
\end{multline*}
On the other hand,
$$
B(g)B(h)=B(xa)B(yb)=B(x)B(a)B(y)B(b)=B(xy)B(a)B(b).
$$
Hence, our equality is equivalent to
$$
B(a)=B((aB(xa))^y)\cdot B(B(xa)^{-1}),
$$
and
$$
1=B((aB(xa))^y)\cdot B(B(xa)^{-1})\cdot B(a^{-1})=B((aB(xa))^yB(xa)^{-1}a^{-1}).
$$
Denote $z=aB(xa)$. Since $z\in A$ and $[x,y]=1$, we have
\begin{equation*}
1=B(z^yz^{-1})=B(z^{-1}z^y)=B([z,y])=B([xz,y])=
B([xaB(xa),y])=B([\widetilde{B}((xa)^{-1}),y]).
\end{equation*}

It means that $[\widetilde{B}(t),y]\in \ker(B)$ for every $t\in G$ and $y\in\langle f \rangle$, which is equivalent to $[\Imm(\widetilde{B}),f]\leqslant\ker(B)$.
\end{proof}

Now we apply Lemma~\ref{Extension} to construct a non-splitting Rota--Baxter operator on a~group of order $16$. This is the first RB-operator that does not correspond to an exact factorization or to a homomorphism into an abelian subgroup.

\begin{example}\label{ex:extraspecial_group}
 Consider the group 
 $$G=\langle a,b,c\, |\, a^4=b^2=c^2=1, [a,b]=[c,b]=1, a^c=ab \rangle\simeq(\mathbb{Z}_4\times\mathbb{Z}_2)\rtimes\mathbb{Z}_2.$$
 Then $A=\langle a^2,b,c\rangle\simeq\mathbb{Z}_2\times\mathbb{Z}_2\times\mathbb{Z}_2$ is a normal abelian subgroup of $G$. Define $B \colon G\rightarrow G$ as follows: 
 $$
 B|_{A}  \text{ is a homomorphism with } B(a^2)=1, B(b)=a^2b, B(c)=a^2bc,
 $$ 
 $$B(a)=a^2\in A, \quad B(ax)=B(a)B(x), \quad x\in A.$$

 It is straightforward to check that $\Imm(\widetilde{B})=\langle a, b\rangle$. Hence, $[\Imm(\widetilde{B}),a]=1$ and $B$ is a~Rota--Baxter operator on $G$ by Lemma~\ref{Extension}.
\end{example}

\section{Proof of the main results}

{\it Proof of Theorem~\ref{th:exceptional}}. Let $B$ be a Rota--Baxter operator on a finite simple exceptional group of Lie type $G$. By Proposition~\ref{Prop:STSh}(d) we have factorization $G = \Imm(B)\Imm(\widetilde{B})$. All factorizations of $G$ were described in~\cite{HLS} and are presented in Table~\ref{table:factorization} (see \cite[Table~5]{LPS_fact}). By Proposition~\ref{Prop:simple}(1) we need to consider only the groups that have non-trivial factorizations.

Assume that $G=\operatorname{F_4(q)}$, $q=2^m$. The only two possible factorizations of $G$ are $G=HL$ with $H=\operatorname{Sp_8(q)}$ and $L\in\{\operatorname{^3D_4(q)}, \operatorname{^3D_4(q).3}\}$. Since $\operatorname{Sp_8(q)}$ is simple, $B$ cannot be non-splitting by Proposition~\ref{Prop:simple}(2). 
On the other hand, these two factorizations are not exact and cannot lead to a splitting Rota--Baxter operator. Thus, we have only trivial factorizations of $G$ that give trivial Rota--Baxter operators.

Assume that $G=\operatorname{G_2(q)}$. Consider the possible factorizations of $G$ \cite[Table~5]{LPS_fact}. The factorizations 
$$
\operatorname{G_2(4)}=\operatorname{J_2}\cdot\operatorname{SU_3(4)}=\operatorname{J_2}\cdot\operatorname{SU_3(4)}.2
$$
are not exact and have a simple factor $\operatorname{J_2}$.
Thus, these factorizations do not correspond to a Rota--Baxter operator. 

Let $q=3^{2m+1}$ and $$\operatorname{G_2(q)}=\operatorname{^2G_2(q)}\cdot\operatorname{SL_3(q)}=\operatorname{^2G_2(q)}\cdot\operatorname{SL_3(q)}.2.$$
Direct calculations show that
$|\operatorname{^2G_2(q)}\cap\operatorname{SL_3(q)}|=q-1$. 
Hence, these factorizations are not exact and have a simple factor $\operatorname{^2G_2(q)}$. 
Thus, these factorizations do not correspond to a Rota--Baxter operator. 

Finally, let $q=3^m$ and
$$
\operatorname{G_2(q)}=HL, \text{ where } 
H\in\{\operatorname{SL_3(q)},\operatorname{SL_3(q)}.2\} \text{ and } L\in\{\operatorname{SU_3(q)},\operatorname{SU_3(q)}.2\}.
$$
Denote $R=H\cap L$. 
Then $|R|=k\cdot|\operatorname{SL_3(q)}\cap \operatorname{SU_3(q)}|=k(q^2-1)$, where $k\,|\,4$. 
In particular, these factorizations are not exact. 
We also have $|R|=|\Imm(B):\ker(\widetilde{B})|$ (\cite[equation (8)]{GG}) and $\ker(\widetilde{B})\unlhd\Imm(B)$. On the other hand, every proper normal subgroup of $\operatorname{SL_3(q)}$ lies in the center $\operatorname{Z(SL_3(q))}$. 
Since $q=3^m$, we have $|\operatorname{Z(SL_3(q))}|=(3,q-1)=1$. Hence, the case $\Imm(B)=\operatorname{SL_3(q)}$ implies $\ker(\widetilde{B})=\Imm(B)$ or $\ker(\widetilde{B})=1$.
If $\Imm(B) = \operatorname{SL_3(q).2}$ and $\ker(\widetilde{B}) =\operatorname{SL_3(q)}$, then we have $|R|=2<k(q^2-1)$, a contradiction. 
Thus, $B$ must be a trivial Rota--Baxter operator.

\begin{table}
	\caption{Factorizations of exceptional groups}\label{table:factorization}
        \smallskip
	\centering
	\begin{tabular}{|lll|}
		\hline
		$G$ & $H$ & $L$ \\ \hline
		$\operatorname{G_2(q)}$, $q=3^m$ & $\operatorname{SL_3(q)}$ or $\operatorname{SL_3(q)}\!.2$  & $\operatorname{SU_3(q)}$ or $\operatorname{SU_3(q)}\!.2$  \\ \hline
        $\operatorname{G_2(q)}$, $q=3^{2m+1}$ & $\operatorname{SL_3(q)}$ or $\operatorname{SL_3(q)}\!.2$  & $\operatorname{^2G_2(q)}$  \\ \hline
        $\operatorname{F_4(q)}$, $q=2^m$ & $\operatorname{Sp_8(q)}$  & $\operatorname{^3D_4(q)}$ or $\operatorname{^3D_4(q)}\!.3$  \\ \hline
        $\operatorname{G_2(4)}$ & $\operatorname{J_2}$  & $\operatorname{SU_3(4)}$ or $\operatorname{SU_3(4)}\!.2$  \\ \hline
	\end{tabular}
\end{table}

\vspace{1em}

\noindent{\it Proof of Theorem~\ref{th:PSL2}}. Let $B$ be a Rota--Baxter operator on $G=\operatorname{PSL_2(q)}$. By Proposition~\ref{Prop:STSh}(d) we have factorization $G = \Imm(B)\Imm(\widetilde{B})$. All maximal factorizations of $G$ were described in~\cite[Tables~1,~3]{LPS_fact}. All factorizations $X=AB$, where $X$ is a finite almost simple group and $A,B$ are core-free subgroups such that $A\cap B$ is cyclic or dihedral, were determined in~\cite{LP_fact}. In particular, all exact factorizations can be found in~\cite[Tables~10.3, 10.4]{LP_fact}.

At first, consider the case $q=2^m\geqslant4$. 
Then we have only one maximal factorization $G=HL$, where $H=\mathbb{Z}_2^m\!:\!\mathbb{Z}_{q-1}$ and $L=\operatorname{D_{2(q+1)}}$. Notice that $H$ has no subgroups of index~2.
Denote $R=H\cap L$. Then $|R|=2$, and this factorization is not exact. If $H=\Imm(B)$ and $L=\Imm(\widetilde{B})$, we have $2=|R|=|\Imm(B):\ker(\widetilde{B})|$ and $H$ has a subgroup of index 2, which is not true.
Hence, the only possible factorization is $G=HL'$, where $L'=\operatorname{\mathbb{Z}_{q+1}}$. This factorization is exact and gives the splitting Rota--Baxter operator on~$G$.

Now consider the case $q=p^m\geqslant5$ with an odd prime $p$. 
Let $H=\mathbb{Z}_p^m\!:\!\mathbb{Z}_{\frac{q-1}{2}}$ and $L=\operatorname{D_{q+1}}$ $(q\neq7,9)$ be the maximal subgroups of $G$. According to~\cite[Table~1]{LPS_fact}, we have a factorization $G=HL$ if and only if $q\equiv3\!\pmod4$. This factorization is exact, and we have a splitting Rota--Baxter operator. 

The other maximal factorizations can be found in~\cite[Table~3]{LPS_fact}. All these maximal factorizations are also presented in~\cite[Table~10.4]{LP_fact}, and therefore all possible factorizations of these groups are presented in~\cite[Table~10.4]{LP_fact}. All exact factorizations give us splitting Rota--Baxter operators (see Table~\ref{table}). Since $\operatorname{A_5}$ is simple, all non-exact factorizations with $\operatorname{A_5}$ do not lead to a Rota--Baxter operator.

The last factorization is $\operatorname{PSL_2(7)}=HL=\operatorname{S_4}\cdot 7.3$. We have $|R|=|H\cap L|=3$. Since $\operatorname{S_4}$ does not have a normal subgroup of index $3$, this factorization does not lead to a~Rota--Baxter operator.

All non-trivial Rota--Baxter operators on $\operatorname{PSL_2(q)}$ are presented in Table~\ref{table}. Now we explain why every row of Table~\ref{table} corresponds to exactly one splitting Rota--Baxter operator up to the equivalence.

Let $G=HL=H'L'$ be two exact factorizations from a row of Table~\ref{table}. 
Notice that there are two conjugacy classes of the maximal subgroups of $\operatorname{A_5}$ and $\operatorname{S_4}$ in the corresponding groups and one conjugacy class of the other factors. These two conjugacy classes of $\operatorname{A_5}$ or $\operatorname{S_4}$ are conjugate in the automorphism group by some outer automorphism~$\varphi$. 
Hence, there exist $\varphi\in\Aut(G)$ and $\alpha_x\in\Inn(G)$ such that $H^\varphi=H'$ and $(L^{\alpha_x})^{\varphi}=L'$. Putting $\Phi=(\varphi,\varphi\alpha_x)$, we have $\Phi(H_B)=H_{B'}$, which means that our factorizations correspond to equivalent Rota--Baxter operators.
\hfill $\square$

\begin{table}
	\caption{Rota--Baxter operators on $\operatorname{PSL_2(q)}$}\label{table}
        \smallskip
	\centering
	\begin{tabular}{|llll|}
		\hline
		Group & $\Imm(B)$ & $\Imm(\widetilde{B})$ & Remark\\ \hline
$\operatorname{PSL_2(p^m)}$ & $\operatorname{D_{q+1}}$  & $p^m\!:\!(\frac{q-1}{2})$ &  $q\equiv3\!\pmod4$, $p\neq7,11,23,59$  \\ \hline
$\operatorname{PSL_2(p^m)}$ & $\operatorname{\mathbb{Z}_{q+1}}$  & $p^m\!:\!(q-1)$ & $p=2$ \\ \hline
        $\operatorname{PSL_2(7)}$ & $\operatorname{S_4}$  & 7 &  \\ 
        $\operatorname{PSL_2(7)}$ & $\operatorname{D_8}$  & 7\,:\,3 &  \\ \hline
        $\operatorname{PSL_2(11)}$ & $\operatorname{A_5}$  & 11 & \\
        $\operatorname{PSL_2(11)}$ & $\operatorname{A_4}$  & 11\,:\,5  & \\
        $\operatorname{PSL_2(11)}$ & $\operatorname{D_{12}}$  & 11\,:\,5 &  \\ \hline
        $\operatorname{PSL_2(23)}$ & $\operatorname{S_4}$  & 23\,:\,11 &  \\
        $\operatorname{PSL_2(23)}$ & $\operatorname{D_{24}}$  & 23\,:\,11 & \\ \hline
        $\operatorname{PSL_2(59)}$ & $\operatorname{A_5}$  & 59\,:\,29  & \\
        $\operatorname{PSL_2(59)}$ & $\operatorname{D_{60}}$  & 59\,:\,29  & \\ \hline
	\end{tabular}
\end{table}

Note that the equivalence on the set of RB-operators is stronger than a conjugation by the automorphism group. 
In particular, two conjugacy classes of $\operatorname{S_4}$ and $\operatorname{A_5}$ mentioned above provide two RB-operators that are not conjugate in the automorphism group of $\operatorname{PSL_2(23)}$ and $\operatorname{PSL_2(59)}$ respectively.

\section*{Acknowledgements}

The study was supported by a grant from the Russian Science Foundation \textnumero~23-71-10005, https://rscf.ru/project/23-71-10005/

\noindent Alexey Galt \\
Vsevolod Gubarev \\
Novosibirsk State University \\
Pirogova str. 1, 630090 Novosibirsk, Russia \\
Sobolev Institute of Mathematics \\
Acad. Koptyug ave. 4, 630090 Novosibirsk, Russia \\
e-mail: galt84@gmail.com, wsewolod89@gmail.com

\end{document}